\documentclass[letterpaper, journal,onecolumn,draftclsnofoot]{IEEEtran}
\pdfoutput=1
\usepackage{ifpdf}
\IEEEoverridecommandlockouts
\usepackage{amsfonts}       % blackboard math symbols
\usepackage{bm}
\usepackage{amsmath,amssymb}
\usepackage{xcolor}
\usepackage{color}
\usepackage{graphicx}
\usepackage{caption}

\usepackage{enumitem}
\usepackage{amsthm}
\usepackage{float}
\usepackage{subcaption}
\newtheorem{theorem}{Theorem}
\newtheorem{proposition}{Proposition}

\newtheorem{definition}{Definition}
\newtheorem{lemma}{Lemma}

\newtheorem{assumption}{Assumption}

\graphicspath{ {./images/} }

\newcommand{\ymax}{y_{\max}}

\title{\LARGE \bf Multi-Agent Reinforcement Learning in Cournot Games}

\author{
	\IEEEauthorblockN{Yuanyuan Shi, Baosen Zhang}\\
	\IEEEauthorblockA{Department of Electrical and Computer Engineering\\University of Washington, Seattle, WA, USA
		\\\{yyshi, zhangbao\}@uw.edu}
	\thanks{This work is partially supported by the National Science Foundation awards CNS-1931718 and ECCS-1807142.}
}

\begin{document}
\maketitle

\begin{abstract}
In this work, we study the interaction of strategic agents in continuous action Cournot games with limited information feedback. Cournot game is the essential market model for many socio-economic systems where agents learn and compete without the full knowledge of the system or each other. We consider the dynamics of the policy gradient algorithm, which is a widely adopted continuous control reinforcement learning algorithm, in concave Cournot games. We prove the convergence of policy gradient dynamics to the Nash equilibrium when the price function is linear or the number of agents is two. This is the first result (to the best of our knowledge) on the convergence property of learning algorithms with continuous action spaces that do not fall in the no-regret class.
\end{abstract}

\section{Introduction}
\label{sec:intro}
Reinforcement Learning (RL) has yielded impressive results in various
sequential decision-making problems in recent years.
These successes include playing games with super-human performance~\cite{mnih2015human, brown2019superhuman}, solving complex robotic tasks~\cite{lillicrap2016continuous,schulman2015trust, mankowitz2020robust} and autonomous driving~\cite{shalev2016safe,sallab2017deep}. Some of these applications focus on a single agent, but many applications of interest consider groups of agents (or players). In the latter case, agents operate in a common environment, each of them interacting with the environment and other agents. This multi-agent setting contains a rich set of models and has received significant attention in the past several years (see~\cite{zhang2019multi} and references within).

In this paper, we study the dynamics of learning agents, where each agent aims to optimize its long-term expected return by repeatedly participating in the game. This question has been mostly studied at two extremes, where the agents are either fully cooperative~\cite{oroojlooyjadid2019review} or they are fully competitive (i.e. zero-sum games)~\cite{littman1994markov,schafer2019competitive}. Instead of these extremes, we focus on the case of \emph{general-sum games}, where the agents are self-interested but no adversarially so. General-sum games have been widely used to model the interactions and competition in cyber-physical systems because of the wide range of individual goals and possible relationships between agents~\cite{crandall2005learning,dixit2015games}. Each agent in the games is self-interested, and reward may be conflicting with others, but often not in a zero-sum manner. Compared to the two extreme cases, there have been relatively few convergence results for general-sum games, partly because of the technical challenge caused by heterogeneous goals and limited information.

We focus on a specific class of general-sum games where the agents undergo \emph{Cournot competitions}~\cite{cournot1838recherches}. Cournot game has been used to model the energy systems~\cite{kirschen2018fundamentals}, transportation networks~\cite{bimpikis2019cournot} and healthcare systems~\cite{chletsos2019hospitals}. It is also one of the most prevalent models of firm competition in economics. In the Cournot game model, firms control their production level, which influences the market price. For example, some electricity markets can be thought of as a Cournot game, where the production level is the amount of power produced by the generators, and the price is decided by the total generation bid and the demand. Each generator’s payoff is then calculated as the market price multiplying its share of the supply, subtracting its production cost~\cite{kirschen2018fundamentals}.\footnote{In this a first-order approximation of the locational marginal pricing used by markets in the United States.} In this example the generators do not cooperate with each other, but the total profit is also not zero.

When learning is not needed, there is a wealth of results for the Cournot competition. For example, when each agent has full information about the game, including the price function and the cost function of all other agents, there are many works characterizing the properties of the Nash equilibrium of the game~\cite{shapiro1989theories,kannan2012distributed,daughety2005cournot}.
However, when learning is involved and agents do not have full information, the properties of the game are not well understood. This is even the case in the simplest setting, where the agents only receive the price from the system as the feedback but do not know the price function form nor the actions of other agents.

To answer what happens when agents learn, we must model how they learn - or more precisely, what type of learning algorithms is used. A key technical challenge is that when learning is used, the Cournot game becomes stochastic.
Currently, most works focus on no-regret algorithms~\cite{nadav2010no,mertikopoulos2019learning,shi2019learning} because they only require a minimal set of assumptions on the game. In addition, the no-regret definition could be directly translated to the coarse correlated equilibrium condition~\cite{roughgarden2010algorithmic} for a wide range of algorithms (e.g., multiplicative-weight~\cite{arora2012multiplicative}, online mirror descent~\cite{hazan2016introduction}, Follow-the-Regularized-Leader~\cite{kalai2005efficient}). However, while the theoretical properties of no-regret algorithms are attractive, they also limit the applicability of these algorithms. In practice, systems and agents are often \emph{not adversarial} to each other, and the competition is often designed to have specific structures. In many games, it is more natural for players to use myopic policies such as reinforcement learning algorithms that directly aim for profit maximization~\cite{chen2018optimal}. In addition, the notation of coarse correlated equilibrium can be quite weak, and sharper results are often desired.

These algorithms can lead to much better performances than no-regret algorithms, but proving their convergence has proven to be challenging~\cite{zhang2019multi} since the coupling between the (continuous) actions of the players must be carefully analyzed. Attempts have been made to discretize the space(e.g., Q-learning~\cite{leslie2005individual}) then studying the resulting discrete game, but the dimensionally quickly grows and important features (e.g., convexity) are hard to retain~\cite{rodrigues2009dynamic, arslan2016decentralized}.

In this work, we directly work with the continuous action and state space by considering agents use policy gradient learning algorithms. In particular, we assume the class of policies where the actions are parameterized by the mean of distributions (e.g., Gaussian policies). The \emph{major contribution} of this work is in the following: we prove that when the price function is linear or when there are two agents, there is a unique Nash equilibrium (NE) in the \emph{stochastic} Cournot game, and the policy gradient converges exponentially quickly to the NE. This is the first result (to the best of our knowledge) on the convergence property of algorithms with continuous action spaces that do not fall in the no-regret class.

The rest of the paper is organized as follows. Section \ref{sec:model} covers the background and prior works on standard Cournot games and policy gradient algorithm. Section \ref{sec:theorem} describes the stochastic Cournot game and sketches the convergence proof for policy gradient agents in Cournot games. Section~\ref{sec:proof} provides detailed proof of the convergence result. We provide several case studies in Section~\ref{sec:results} that demonstrate the convergence behavior of agents in various settings. Finally, Section \ref{sec:conslusion} concludes the paper and outlines directions for future work.

\section{Problem Setup and Preliminaries}
\label{sec:model}
\subsection{Cournot Game}
\begin{definition}[Cournot Game]
	Consider $N$ players produce homogeneous products in a limited market, where the action space of player $i$ is its production level $x_i \geq 0$. The utility function of player $i$ is denoted as $\pi_i(\bm{x}) = p(\sum_{j=1}^{N} x_j)x_i-C_i(x_i)$, where $p$ is the market price (inverse demand) function that maps the total production quantity to a price in $\mathbb{R}$ and $C_i(\cdot)$ is the cost function of player $i$.
\end{definition}

The goal of each player $i$ in the Cournot game is to choose the best production quantity $x_i$ such that maximizes his utility $\pi_i$. An important concept in game theory is the \emph{Nash equilibrium}, at which state no player can increase their payoffs by unilaterally changing their strategies.
A Nash equilibrium of the Cournot game defined by $(\pi_1, ..., \pi_N)$ is a vector $\bm{x}^{*} \geq 0$ such that for all $i$:
\begin{equation}
\pi_i(x_i^{*}, \bm{{x}_{-i}^{*}}) \geq \pi_i(\tilde{x}_i, \bm{{x}_{-i}^{*}}), \text{\ \ for all\ \ } \tilde{x}_i,
\end{equation}
where $\bm{x}_{-i}$ denotes the actions of all players except $i$.
In this paper, we restrict our attention to Cournot games satisfying the following assumptions:
\begin{assumption}
	We assume the price function and cost functions:
	\begin{enumerate}
		\item[(A1)] The price function $p$ is concave, strictly decreasing and twice differentiable on $[0,\ymax]$, where $\ymax$ is the first point where $p$ becomes $0$. For $y>\ymax$, $p(y)=0$. In addition, $p(0)>0$. 
		\item[(A2)] The cost function $C_i(x_i)$ is convex, strictly increasing, twice differentiable and $p(0)>C_i'(0)$, for all $i$.
	\end{enumerate}
\end{assumption}
These assumptions are standard in the literature  (e.g., see~\cite{johari2005efficiency} and references within). The assumption $p(0)>C_i'(0)$ is to avoid the triviality of a player never participating in the game.  The following proposition shows that Cournot game satisfying the above assumptions has an unique Nash equilibrium.
\begin{proposition}\label{prop:unique_ne}
	A Cournot game satisfying (A1) and (A2) has exactly one Nash equilibrium.
\end{proposition}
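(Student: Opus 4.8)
The plan is to prove existence by a standard fixed-point argument and then reduce uniqueness to a one-dimensional monotonicity statement about the aggregate output $y=\sum_{j}x_j$. For existence, I would first note that $\pi_i$ is strictly concave in $x_i$: since $\partial^2\pi_i/\partial x_i^2 = p''(y)\,x_i + 2p'(y) - C_i''(x_i)$, and $p''\le 0$, $x_i\ge 0$, $p'<0$, $C_i''\ge 0$ by (A1)--(A2), this is strictly negative on $[0,\ymax]$. Moreover, for any fixed $\bm{x}_{-i}$ the payoff equals $-C_i(x_i)$ once the total reaches $\ymax$ and is then strictly decreasing in $x_i$, so a best response always lies in the compact interval $[0,\ymax]$; hence each action set can be restricted to $[0,\ymax]$ without changing the set of Nash equilibria. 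On this compact convex domain strict concavity makes the best-response map single-valued and continuous, so Brouwer's theorem (equivalently the Debreu--Glicksberg--Fan theorem) yields at least one Nash equilibrium.

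For uniqueness, I would first show that every Nash equilibrium $\bm{x}^*$ has aggregate $y^*:=\sum_j x_j^*$ strictly inside $(0,\ymax)$. If $y^*=0$, then $\partial_{x_i}\pi_i|_{x_i=0}=p(0)-C_i'(0)>0$ for each $i$ by (A2), contradicting optimality; and if $y^*\ge\ymax$, then $p(y^*)=0$ and $p'(y^*)\le 0$, so any player with $x_i^*>0$ has $\partial_{x_i}\pi_i=p'(y^*)x_i^*-C_i'(x_i^*)\le -C_i'(x_i^*)<0$ and would strictly gain by reducing $x_i^*$. Hence $p(y^*)>0$ and $p'(y^*)<0$. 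Next, for each $y\in(0,\ymax)$ I define $\phi_i(y)$ as player $i$'s best output consistent with a total market output of $y$: $\phi_i(y)=0$ if $p(y)\le C_i'(0)$, and otherwise $\phi_i(y)$ is the unique $\xi>0$ solving $p(y)+p'(y)\xi-C_i'(\xi)=0$ (unique because $\xi\mapsto p(y)+p'(y)\xi-C_i'(\xi)$ is strictly decreasing, positive at $\xi=0$, and tends to $-\infty$). Using strict concavity of $\pi_i$ in $x_i$ to convert first-order conditions into best-response characterizations (and handling the corner $x_i^*=0$ via the inequality form of the optimality condition), one checks that $\bm{x}^*$ is a Nash equilibrium if and only if $x_i^*=\phi_i(y^*)$ for all $i$, equivalently if and only if $y^*$ solves $\Phi(y):=\sum_i\phi_i(y)=y$. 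Thus the number of Nash equilibria equals the number of fixed points of $\Phi$ in $(0,\ymax)$.

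The remaining step is to show $\Phi$ has at most one fixed point. Each $\phi_i$ is continuous and non-increasing, and \emph{strictly} decreasing on the set where it is positive: by the implicit function theorem, wherever $\phi_i(y)>0$, $\dfrac{d\phi_i}{dy}=-\dfrac{p'(y)+p''(y)\,\phi_i(y)}{p'(y)-C_i''(\phi_i(y))}$, and both numerator and denominator are strictly negative since $p'<0$, $p''\le 0$, $\phi_i\ge 0$, $C_i''\ge 0$. Hence $\Phi$ is continuous, non-increasing, and strictly decreasing on $\{y:\Phi(y)>0\}$, so the map $y\mapsto y-\Phi(y)$ is strictly increasing on $(0,\ymax)$ and vanishes at most once. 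Combined with the existence proved above, this pins down a unique aggregate $y^*$, whence $\bm{x}^*=(\phi_1(y^*),\dots,\phi_N(y^*))$ is the unique Nash equilibrium.

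The main obstacle is the sign of $d\phi_i/dy$: it is precisely here that concavity of the price function is indispensable — the term $p''(y)\phi_i(y)$ keeps the numerator negative, whereas for convex $p$ the function $\phi_i$ can be increasing and multiple equilibria may appear. Some care is also needed with corner solutions $x_i^*=0$ and with the behavior of $p$ at the boundary point $\ymax$, which is why I would argue throughout via one-sided first-order/optimality conditions rather than a global monotone-operator (Rosen-type) condition, since the latter does not hold for general concave Cournot games with heterogeneous costs.
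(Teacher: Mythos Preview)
Your argument is correct: the existence step via strict concavity and a fixed-point theorem is standard, and the uniqueness step through the aggregate map $\Phi(y)=\sum_i\phi_i(y)$ is exactly the right reduction, with the implicit-function computation showing each $\phi_i$ is strictly decreasing where positive. The handling of the boundary cases $y^*=0$ and $y^*\ge \ymax$ and of corner best responses is also fine.

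As for comparison with the paper: the paper does not give a proof of this proposition at all; it simply cites Theorem~1 of Szidarovszky and Yakowitz (1977). Your write-up is essentially a self-contained version of that very argument---the ``backward reaction correspondence'' or aggregate-output approach pioneered in that reference---so you have reproduced the content of the cited result rather than taken a genuinely different route. The only remark worth adding is your closing observation about Rosen-type diagonal strict concavity: you are right that it generally fails here (this is why the paper only invokes Rosen's condition later, for the \emph{stochastic} game with a linear price), so the aggregate-monotonicity argument you chose is indeed the appropriate tool for Proposition~\ref{prop:unique_ne}.
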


Proof of Proposition~\ref{prop:unique_ne} refers to Theorem 1 in~\cite{szidarovszky1977new}.

\subsection{Policy-based Reinforcement Learning}
\label{section:pg}
In this work, we adopt a \emph{policy-based} methods of how agents would learn and act. For each agent, we assume that it has (possibly noisy) information of the system states at time $t$, which we denote by $\bf{s}_t$. This agent maintains a policy $\pi_{\theta}(\cdot|\bf{s}_t)$, which is a probability distribution on the action it would take, conditioned on the agent’s information $\bf{s}_{t}$. At each time step, after the agent picks actions $\bm{a}_t \sim \pi_{\theta}(\cdot|\bf{s}_t)$, the system releases reward.  Subsequently, players update their policy parameters along the gradient direction of their long-term expected reward. Such learning procedure is called \emph{policy gradient} method~\cite{sutton2000policy} in the literature. As a key permise for the idea, the policy long-term reward is,
\begin{equation}\label{eq:pg_pay}
J(\theta) = E_{\tau \sim p_{\theta}(\tau)} [\sum_{t} r(\mathbf{s}_t, \mathbf{a}_t)]
\end{equation}
and the gradient is given by,
\begin{equation}\label{eq:pg_gradient}
\nabla_{\theta} J(\theta) = E_{\tau \sim p_{\theta}(\tau)} [(\sum_{t=1}^{T} \nabla_{\theta} \log \pi_{\theta}(\bm{a}_t|\bm{s}_t))  (\sum_{t=1}^{T} r(\bm{s}_t, \bm{a}_t))]\,,
\end{equation}
where $\tau$ and $J(\theta)$ are trajectories and the expected trajectory return under policy $\pi_\theta$, respectively, and $\nabla_{\theta} \log \pi_{\theta}(\bm{a}_t|\bm{s}_t))$ is the score function of the policy. Various of policy gradient methods have been proposed by estimating the gradient~\eqref{eq:pg_gradient} in different ways, including REINFORCE~\cite{sutton2000policy}, natural policy gradient~\cite{kakade2002natural} and actor-crtic algorithms~\cite{mnih2016asynchronous}.

% \newpage
\section{Stochastic Cournot Game}
\label{sec:theorem}
We discuss the main convergence results in this section. As briefly mentioned, we consider policy-based models of how agents choose and evolve their actions. In particular, as the agents only get the reward as feedback and nothing else, the dynamics in Section~\ref{section:pg} reduces to the \emph{stateless} version. This is consistent with the practice of many social-economic systems (e.g., energy markets~\cite{quinn2009privacy}), where providing full feedback is either impractical or explicitly disallowed due to privacy and market power concerns.

In particular, we consider a policy that is parameterized by the mean of a distribution. This model includes many popular algorithms, for example, the ubiquitous Gaussian policies and their extensions~\cite{chou2017improving}.
Let $\theta_i$ denote the \emph{mean} of player $i$'s action, and $X_i$ to be a zero-mean random variable. For convenience, we assume it is continuous and has a bounded density function denoted by $f_i(X_i)$.
We say $X_i$ is unimodal at mean if $f_i$ has a global maximum at the mean and no other isolated local maxima \footnote{For example, Gaussian and uniform distributions are unimodal under this definition.}.

At each time step, player $i$ choose the action to play as $a_i \sim \pi_{\theta_i}(\cdot)= \theta_i+X_i$. Note that in most Cournot games, the action is interpreted as quantity, that cannot be negative.
Therefore, player $i$ has to play by drawing a quantify from the rectified distribution $(\theta_i+X_i)^+$, where $a^+=\max(a,0)$.
Under the Cournot game setup, the expected profit in Eq.~\eqref{eq:pg_pay} can be written out as the follows,
\begin{equation}\label{eqn:pg_cournot_pay}
\begin{split}
J_i(\theta_i;\bm{\theta}_{-i})=E_{\bm{X}}\left[p\left(\sum_{j=1}^N(\theta_j+X_j)^+ \right) (\theta_i+X_i)^+\right.\\
\left.- C_i((\theta_i+X_i)^+)\right].
\end{split}
\end{equation}
and the gradient value in Eq.~\eqref{eq:pg_gradient} equals,
\begin{small}
	\begin{align}\label{eqn:pg_cournot_gradient}
	\nabla_{\theta_i} J_i &= E\left[ 1(\theta_i+X_i \geq 0) \left\{p'\left(\sum_{j=1}^N(\theta_j+X_j)^+ \right) (\theta_i+X_i)\right.\right. \nonumber\\
	&\left.\left.+p\left(\sum_{j=1}^N(\theta_j+X_j)^+ \right)  -C_i'(\theta_i+X_i) \right\}\right],
	\end{align}
\end{small}
where $1(\cdot)$ is the indicator function. 

We call the game associated with these $J_i$'s the \emph{stochastic Cournot game}, where player $i$ chooses $\theta_i$, observe the profit $J_i$, and update $\theta_i$ according to the payoff gradient. The Nash equilibrium of the stochastic Cournot game is defined as, $(\theta_1^{*}, ..., \theta_N^{*})$ such that $\nabla_{\theta_i^{*}} J_i = 0, \forall i$. 
The form of \eqref{eqn:pg_cournot_gradient} has made analyzing the system dynamics difficult compared to standard Cournot games. Firstly, because the actions are rectified, the profit of player $i$ not long just depends on the sum of the other players (as in a Cournot game), but it actually depends on each of the other players' parameters. This rules out many elegant and simple results on the existence and uniqueness of Nash equilibria~\cite{szidarovszky1982contributions,novshek1985existence,amir1996cournot,ewerhart2014cournot}. Secondly, although the realization fo the actions are nonnegative, it is not obvious that $\theta_i$'s need to be nonnegative, or even bounded. The main result in the paper is to overcome these challenges and show that under some assumptions, the game is well-behaved and policy gradient updates converge exponentially quickly to the Nash equilibrium in Stochastic Cournot games.
\begin{theorem}\label{theorem: pg_conv}
	Consider a stochastic Cournot game satisfying the assumptions (A1) and (A2). Suppose each player's policy is parameterized as the mean $\theta_i$ and a zero-mean random variable $X_i$ that is unimodal at the mean with infinite support, and suppose that all players follow policy gradient in \eqref{eqn:pg_cournot_gradient} to update their mean. Then the policies converge to the Nash equilibrium exponentially quickly for all initializations either of the following condition holds:
	\begin{enumerate}
		\item The price function is linear.
		\item The number of players equals two.
	\end{enumerate}
	% has a unique Nash equilibrium. Suppose each player's policy is parameterized as the mean $\theta_i$ and a zero-mean random variable $X_i$  (unimodal at the mean with infinite support). If all players follow the policy gradient algorithm for the mean update, the expected sequence of play converges to the Nash equilibrium exponentially quickly for all initializations under two conditions: 1) price function is linear; or 2) the number of players equals two.
\end{theorem}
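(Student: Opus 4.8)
\emph{Proof sketch.} The plan is to study the continuous--time gradient--play flow $\dot\theta_i = g_i(\bm\theta) := \nabla_{\theta_i}J_i(\bm\theta)$, with $g_i$ given by \eqref{eqn:pg_cournot_gradient}, and then to transfer the conclusion to the discrete policy--gradient updates by a standard small--step / stochastic--approximation tracking argument. A preliminary observation: although \eqref{eqn:pg_cournot_gradient} contains the indicator $1(\theta_i+X_i\ge 0)$ and $p',p''$ have a kink where the aggregate quantity first reaches $\ymax$, averaging against the continuous bounded--density infinite--support $X_i$ smooths these, so $g=(g_1,\dots,g_N)$ is locally Lipschitz (the flow is well posed) and differentiable off a null set, which is all that is needed below.

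Second, I would establish an \emph{a priori bound}: from any initialization the flow enters in finite time a compact convex forward--invariant set $B$. The upper bound on each $\theta_i$ comes from (A1): once $\theta_i$ is large the aggregate exceeds $\ymax$ with probability near one, so $p\equiv p'\equiv 0$ on essentially the whole range and $g_i\le-\tfrac12 C_i'(0)<0$ irrespective of the opponents. The lower bound comes from (A2), $p(0)>C_i'(0)$, which keeps the marginal profit positive near $\theta_i+X_i=0^{+}$; the delicate point is that while an over--producing opponent is being driven back down (which takes only finite time) player $i$'s mean may drift somewhat negative, so this drift has to be bounded uniformly. This step is exactly what confronts the difficulty that the $\theta_i$ need not be a priori bounded, and it also fixes $B$ so as to exclude the extreme configurations in which a player sits far above $\ymax$.

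Third --- the heart of the proof --- I would show that $g$ is \emph{strongly monotone} on $B$, i.e.\ $\langle g(\bm\theta)-g(\bm\theta'),\bm\theta-\bm\theta'\rangle\le-\mu\|\bm\theta-\bm\theta'\|^2$ for some $\mu>0$, equivalently $\tfrac12(Dg+Dg^{\top})\preceq-\mu I$ on $B$. Differentiating \eqref{eqn:pg_cournot_gradient}, with $\partial S/\partial\theta_j=1(\theta_j+X_j\ge0)=:1_j$, gives for $j\ne i$ $[Dg]_{ij}=E\!\big[1_i1_j\big(p''(S)(\theta_i+X_i)+p'(S)\big)\big]$ and on the diagonal $[Dg]_{ii}=E\!\big[1_i\big(p''(S)(\theta_i+X_i)+2p'(S)-C_i''(\theta_i+X_i)\big)\big]+f_i(-\theta_i)\big(E[p(S_{-i})]-C_i'(0)\big)$, where $S_{-i}=\sum_{j\ne i}(\theta_j+X_j)^{+}$. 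Every term is $\le 0$ except the last, rectification--induced one, which is bounded by $f_i(0)\big(p(0)-C_i'(0)\big)$ via unimodality at the mean; this ``curvature defect'' is precisely why $J_i$ is only \emph{approximately} concave in $\theta_i$, and it must be dominated uniformly on $B$ by the genuinely negative $2E[1_ip'(S)]$ and $-E[1_iC_i'']$ --- which is where the construction of $B$ in Step 2 is used. If the price is linear, $p''\equiv0$ (away from the single kink at $\ymax$, whose contribution is a bounded correction of the same kind), so $[Dg]_{ij}=E[1_i1_jp'(S)]$ and one can write $Dg=p'\,A-(\text{n.s.d.\ diagonal})+\mathrm{diag}(\text{defect}_i)$ with $A:=E[\,1(S<\ymax)\,\bm v\bm v^{\top}]\succeq0$, $\bm v=(1_1,\dots,1_N)$ --- an average of rank--one PSD matrices --- so it suffices to beat each $\mathrm{defect}_i$ by $|p'|\,E[1_i\,1(S<\ymax)]$ on $B$. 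If instead $N=2$, negative--definiteness of a symmetric $2\times2$ matrix is a sign--of--determinant inequality, checkable directly from the formulas above using the revenue--concavity identity $p''(S)S+2p'(S)\le0$, which side--steps the harder $N$--dimensional estimate that the asymmetric cross terms $E[1_i1_jp''(S)(\theta_i+X_i)]$ would otherwise force.

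Finally, strong monotonicity on convex $B$ gives at most one zero of $g$ there, and since $g$ points inward on $\partial B$ there is exactly one, $\bm\theta^{*}$, the Nash equilibrium of the stochastic game (unique, because $B$ is globally absorbing: any equilibrium point would have a stationary trajectory, hence lie in $B$). With $V(\bm\theta)=\tfrac12\|\bm\theta-\bm\theta^{*}\|^2$, strong monotonicity yields $\dot V=\langle g(\bm\theta)-g(\bm\theta^{*}),\bm\theta-\bm\theta^{*}\rangle\le-2\mu V$ inside $B$, i.e.\ $\|\bm\theta(t)-\bm\theta^{*}\|\le e^{-\mu t}\|\bm\theta(0)-\bm\theta^{*}\|$; combined with the finite absorbing time from Step 2 this is exponential convergence from every initialization, and the tracking argument carries it over to the discrete updates. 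The main obstacle is Step 3: the rectification $(\theta_i+X_i)^{+}$ leaves a positive curvature defect in $\partial^2J_i/\partial\theta_i^2$, and for a nonlinear price the off--diagonal $p''$ terms are asymmetric and enter the diagonal--dominance count unfavourably --- which is exactly why the linear--price regime (where $p''$ vanishes) and the two--player regime (where only one cross term survives) are the ones that go through.
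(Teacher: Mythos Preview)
Your roadmap matches the paper's: a compactness step (the paper's Lemma~\ref{lem:bound}) followed by a Jacobian analysis split into the linear-price case and the two-player case (Lemmas~\ref{lem:NE_conv1} and~\ref{lem:NE_conv2}). Two differences are worth flagging. First, the rectification ``curvature defect'' $f_i(-\theta_i)\big(E[p(S_{-i})]-C_i'(0)\big)$ that you isolate in $[Dg]_{ii}$ is simply absent from the paper: it computes $G_{ii}$ by differentiating under the expectation and drops the boundary term coming from the indicator, so its Hessian is your formula with the defect deleted. Your computation is the more careful one, but you then leave the uniform domination of that positive term on $B$ unproved, and this is exactly the step where your sketch is incomplete; the paper side-steps rather than resolves it. Second, for $N=2$ you aim for negative definiteness of the \emph{symmetrised} Jacobian via the $2\times2$ determinant test. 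The paper does something different: it shows the asymmetric Hessian is strictly row-diagonally dominant with negative diagonal (the rowwise inequality $|p''x_i+2p'-C_i''|>|p''x_i+p'|$ is immediate from (A1)--(A2)), invokes Gershgorin to place the eigenvalues in the open left half-plane, and then cites a dynamical-systems stability result of Ratliff et al.\ rather than Rosen. Diagonal dominance handles the asymmetric cross terms $E[1_11_2\,p''(S)(\theta_i+X_i)]$ rowwise, whereas your determinant route must absorb that asymmetry explicitly. For the linear-price case your decomposition $Dg=p'A+(\text{diagonal})$ with $A=E[\bm v\bm v^{\top}]$ is essentially the paper's $G=G_1+G_2+G_3$; the paper pushes definiteness through the expectation via a short lemma (Proposition~\ref{prop:nd}) saying that if $G^T+G$ is negative semidefinite a.e.\ and strictly so on a set of positive measure, then the averaged $\hat G^T+\hat G$ is negative definite.
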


The condition of the theorem includes Gaussian policies, which is a natural choice for continuous action spaces~\cite{sutton2000policy,silver2014deterministic}, and such a form also includes popular neural network policies~\cite{levine2013guided} where the mean can be parameterized via a neural network.  We also do not restrict the players to be symmetric, and each of the players would adopt different variances or even have completely different classes of distributions. The infinite support requirement of the distribution is a technicality and can be weakened, although it would make the proofs much more cumbersome.

The proof of Theorem~\ref{theorem: pg_conv} proceeds in three lemmas. We defer the full proofs of these lemmas to Section~\ref{sec:proof} and sketch the steps in the proof here. The first step in the proof is to show that we can restrict the actions of the players to a compact region using the following lemma:
\begin{lemma}\label{lem:bound}
	Under the assumptions of Theorem~\ref{theorem: pg_conv}, $\theta_i$ can be restricted to $[\underline{\theta}_i,\ymax]$, where $\underline{\theta}_i$ is a constant.
\end{lemma}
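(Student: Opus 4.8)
The plan is to analyze the sign of the payoff gradient $\nabla_{\theta_i} J_i$ from \eqref{eqn:pg_cournot_gradient} when $\theta_i$ is very large or very small (possibly negative), and show that in both regimes the gradient pushes $\theta_i$ back toward a bounded interval, so the policy gradient dynamics never leave (or quickly enter and stay in) a compact set $[\underline\theta_i,\ymax]$ for each $i$. This is the natural way to make the game ``well-behaved'' before arguing about the equilibrium.

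For the upper bound, I would argue as follows. When $\theta_i > \ymax$, a substantial portion of the probability mass of $\theta_i + X_i$ lies above $\ymax$, and for any realization where $\sum_j (\theta_j+X_j)^+ \ge \ymax$ we have $p = 0$ and $p' = 0$ (by (A1), since $p$ is identically zero past $\ymax$), so the bracketed integrand in \eqref{eqn:pg_cournot_gradient} reduces to $-C_i'(\theta_i+X_i) < 0$ there (using (A2), $C_i$ strictly increasing). On the complementary event, $\theta_i + X_i$ is still typically large, so $p'(\cdot)(\theta_i+X_i)$ is very negative (as $p' < 0$ strictly) while $p(\cdot)$ is bounded by $p(0)$; combining, the expectation is strictly negative once $\theta_i$ exceeds $\ymax$. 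Hence $\theta_i$ can never profitably exceed $\ymax$ --- if it starts above, the gradient is negative and it decreases; if it starts below, it can be shown it never crosses $\ymax$ --- giving the upper endpoint.

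For the lower bound, I would use the assumption $p(0) > C_i'(0)$. When $\theta_i$ is very negative, the rectification $(\theta_i+X_i)^+$ is zero with high probability, but on the event $\theta_i + X_i \ge 0$ (which has positive probability since $X_i$ has infinite support and a density) we have $\theta_i + X_i$ small and nonnegative, the total quantity $\sum_j(\theta_j+X_j)^+$ is also controlled, so the integrand is approximately $p(\text{small}) - C_i'(\text{small})$, which is close to $p(0) - C_i'(0) > 0$ by continuity. More carefully, since $\theta_j \le \ymax$ for all $j$ (from the upper-bound step, applied uniformly), one can bound $\sum_j(\theta_j+X_j)^+$ in probability and conclude $\nabla_{\theta_i} J_i > 0$ whenever $\theta_i$ falls below some constant $\underline\theta_i$ depending on the distributions and on $\ymax$. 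So $\theta_i$ is pushed upward there, establishing the lower endpoint and completing the forward-invariance argument.

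The main obstacle I expect is handling the interaction of the indicator $1(\theta_i+X_i\ge 0)$ with the sum $\sum_j(\theta_j+X_j)^+$ inside a single expectation: the integrand does not factor, $p'$ and $p$ are only controlled through the total quantity, and one must produce quantitative (not merely asymptotic) thresholds $\underline\theta_i$ that are uniform over the opponents' parameters. The upper-bound direction is cleaner because $p$ and $p'$ vanish beyond $\ymax$, so most of the work is in the lower bound, where I would need the unimodality and infinite-support assumptions to guarantee that the event $\{\theta_i+X_i\ge 0\}$ retains enough mass near zero and to control the tail contribution of $-C_i'(\theta_i+X_i)$ against the positive near-origin contribution.
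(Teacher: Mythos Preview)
Your overall strategy---show the gradient is negative for large $\theta_i$ and positive for very negative $\theta_i$---matches the paper's, but the execution for the upper bound has a real gap, and the route for the lower bound differs.

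For the upper bound you argue that when $\theta_i>\ymax$ the integrand is mostly $-C_i'(\cdot)<0$ because $p$ and $p'$ vanish, and on the complementary event ``$\theta_i+X_i$ is still typically large.'' That last claim does not follow: the complementary event forces $\theta_i+X_i\in[0,\ymax-Y)$, so $\theta_i+X_i$ can be small, and near $0$ the integrand $p'(\cdot)(\theta_i+X_i)+p(\cdot)-C_i'(\cdot)\approx p(Y)-C_i'(0)$ is \emph{positive}. A crude size comparison will at best yield some large upper bound, not the sharp threshold $\ymax$ the lemma asserts. The paper handles this with a trick you are missing: after the change of variable $x'=\theta_i+x_i$, integration by parts gives
\[
\int_0^{\ymax-y}\bigl[p'(x'+y)\,x'+p(x'+y)\bigr]\,dx'=p(x'+y)\,x'\Big|_{0}^{\ymax-y}=0,
\]
so the integrand has mean zero and undergoes a single sign change from $+$ to $-$. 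This is precisely where unimodality is used (contrary to your placing it in the lower-bound step): when $\theta_i\ge\ymax$, the density $f_i(x'-\theta_i)$ is \emph{increasing} on $[0,\ymax-y]$, and a mean-zero sign-changing integrand weighted by an increasing function integrates to something strictly negative. That is what delivers $g_i<0$ exactly from $\theta_i=\ymax$ onward.

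For the lower bound you estimate the conditional integrand directly and invoke $p(0)>C_i'(0)$. The paper instead first computes the cross partial $\partial g_i/\partial\theta_j<0$ (from concavity and strict monotonicity of $p$), so $g_i$ is monotone decreasing in each opponent's parameter; the worst case is then all opponents at the already-established cap $\ymax$, and even there $g_i>0$ once $\theta_i$ is sufficiently negative. Your direct approach may be completable, but the monotonicity-in-opponents step is what cleanly gives the uniformity over $\bm{\theta}_{-i}$ that you yourself flagged as the main obstacle.
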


This lemma essentially confines the choices of the players to a compact interval, which sets up the rest of the proof. As a reminder, $\ymax$ is the point where the price function becomes $0$. The proof of this lemma is based on showing that player $i$'s profit will be suboptimal if it chooses an $\theta_i$ outside of the interval, regardless of other players' choices.

Interestingly, to show the parameters of the policy gradients converges to the Nash equilibrium of the stochastic Cournot game for the two cases stated in Theorem~\ref{theorem: pg_conv}, we need two different proof techniques. Therefore, we separate them into two lemmas as stated below.
\begin{lemma}\label{lem:NE_conv1}
	Under the assumptions of (A1)-(A2) and suppose the market price is linear, the policy gradient updates converge to the unique Nash equilibrium exponentially fast under all initial conditions.
\end{lemma}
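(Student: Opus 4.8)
\medskip
\noindent\textbf{Proof proposal for Lemma~\ref{lem:NE_conv1}.}
The plan is to use linearity of the price to show that, once Lemma~\ref{lem:bound} confines the iterates to the compact box $\Theta := \prod_i[\underline{\theta}_i,\ymax]$, the policy-gradient field is a strict contraction toward a single point. Write $G(\bm\theta) = (\nabla_{\theta_1}J_1,\dots,\nabla_{\theta_N}J_N)$ for the pseudo-gradient in \eqref{eqn:pg_cournot_gradient}. I would prove that $-G$ is \emph{strongly monotone}, i.e.\ $\langle \bm\theta-\bm\theta',\, G(\bm\theta)-G(\bm\theta')\rangle \le -\mu\|\bm\theta-\bm\theta'\|^2$ for some $\mu>0$, on the region in which the dynamics live. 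This alone finishes the proof: strong monotonicity makes the stationary set $\{\bm\theta : G(\bm\theta)=0\}$ a singleton $\bm\theta^\star$, which by Proposition~\ref{prop:unique_ne} is the unique Nash equilibrium; and taking $V(\bm\theta)=\tfrac12\|\bm\theta-\bm\theta^\star\|^2$, the gradient-ascent flow $\dot{\bm\theta}=G(\bm\theta)$ obeys $\dot V = \langle \bm\theta-\bm\theta^\star, G(\bm\theta)-G(\bm\theta^\star)\rangle \le -2\mu V$, so $\|\bm\theta(t)-\bm\theta^\star\|\le e^{-\mu t}\|\bm\theta(0)-\bm\theta^\star\|$; the same geometric rate holds for the constant-step update $\bm\theta^{(k+1)}=\bm\theta^{(k)}+\eta\, G(\bm\theta^{(k)})$ as soon as $\eta$ is below a threshold set by $\mu$ and the (finite) Lipschitz constant of $G$ on $\Theta$.

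The core of the argument is the definiteness estimate for $DG(\bm\theta)$, and this is where linearity is decisive: since $p'$ is a negative constant, say $p'\equiv-\beta$, the price contributes no curvature, and differentiating \eqref{eqn:pg_cournot_gradient} leaves a Jacobian whose bulk is, schematically, $DG(\bm\theta) = -\beta\, E[\bm v\bm v^\top] - \mathrm{diag}\big(\beta\,\Pr(\theta_i+X_i\ge 0) + E[C_i''(\theta_i+X_i)\,1(\theta_i+X_i\ge0)]\big) + R(\bm\theta)$, where $v_i := 1(\theta_i+X_i\ge0)$ and $R$ collects the remaining terms. Because $E[\bm v\bm v^\top]\succeq0$ and $C_i''\ge0$, the displayed bulk is symmetric and $\preceq -\beta\,\mathrm{diag}(\Pr(\theta_i+X_i\ge0))\prec0$ on $\Theta$ (infinite support makes each $\Pr(\theta_i+X_i\ge0)>0$) --- the stochastic echo of the classical fact that linear-price Cournot is a potential game with the strongly concave potential $p(0)\sum_i x_i - \tfrac\beta2[(\sum_i x_i)^2+\sum_i x_i^2] - \sum_i C_i(x_i)$. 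It then remains to control $R$, which consists of the boundary terms produced by differentiating the indicators $1(\theta_i+X_i\ge0)$ (each carrying a density factor $f_i(-\theta_i)$) and, because under (A1) the price is $p(y)=\max(p(0)-\beta y,0)$, a further contribution supported on $\{S=\ymax\}$; one must show these are strictly dominated by the negative diagonal above. This is the step that invokes the hypotheses that each $X_i$ has a bounded density, is unimodal at its mean, and has infinite support, together with the lower bound $\underline\theta_i$ from Lemma~\ref{lem:bound}, which keeps the hazard-type ratio $f_i(-\theta_i)/\Pr(\theta_i+X_i\ge0)$ bounded; where even this is too weak, one falls back on the observation that $G_i>0$ in the ``bad'' region (a player facing a high price wants to raise $\theta_i$), so the monotone field pushes the iterates back into the region where the estimate holds.

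I expect the dominant difficulty to be precisely this non-smoothness. Both rectifications --- the action $(\theta_i+X_i)^+$ and the floor in $p$ --- make the integrands of $J_i$ and of $G$ only piecewise smooth, with kinks on $\{\theta_i+X_i=0\}$ and $\{S=\ymax\}$, so $G$ is merely piecewise-$C^1$ and its Jacobian acquires exactly the density/boundary terms $R$ above; taming $R$ and simultaneously verifying forward invariance of the region on which $DG+DG^\top\prec0$ is the technical crux. Everything downstream --- uniqueness of $\bm\theta^\star$ and the exponential rate via $V=\tfrac12\|\bm\theta-\bm\theta^\star\|^2$ --- is then the routine monotone-operator argument sketched in the first paragraph.
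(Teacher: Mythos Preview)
Your overall strategy coincides with the paper's: confine to the compact box via Lemma~\ref{lem:bound}, show the symmetrized Jacobian of the pseudo-gradient is negative definite there, and read off uniqueness of the equilibrium together with an exponential rate. The paper phrases this as Rosen's diagonal-strict-concavity criterion (Proposition~\ref{prop: rosen_condition}) rather than strong monotonicity, but the content is the same, and your ``bulk'' term $-\beta\,E[\bm v\bm v^\top]-\mathrm{diag}\big(\beta\Pr(\theta_i+X_i\ge0)+E[C_i''\,1(\cdot)]\big)$ is, entry for entry, the expectation of the paper's decomposition $\bm G_1+\bm G_2+\bm G_3$ of the pointwise game Hessian.

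Where you diverge is in how the expectation and the second derivative are ordered, and this is precisely where your acknowledged gap sits. You differentiate the expected gradient \eqref{eqn:pg_cournot_gradient} directly and therefore generate the boundary remainder $R$ from the indicators at $\{\theta_i+X_i=0\}$ and from the kink of $p$ at $\ymax$; you then leave ``taming $R$'' as the open technical step. The paper instead inserts a swap lemma (Proposition~\ref{prop:nd}): it works with the \emph{pre-expectation} integrand $f_i(\bm x)=p(\sum_j x_j^+)\,x_i^+-C_i(x_i^+)$, computes its Hessian pointwise (well defined off a null set), shows that matrix is negative semidefinite everywhere and strictly negative definite on the positive-measure event $\{x_j\ge0\ \forall j\}$, and then asserts that the Hessian of $J_i$ is the expectation of this pointwise Hessian. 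Under that interchange your $R$ is identically zero, and the definiteness claim reduces to the elementary observation that $\bm G_1$ is a rank-one block with eigenvalues $kp'$ and $0$, while $\bm G_2,\bm G_3$ are diagonal and nonpositive.

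So your proposal is not on a wrong track, but it is harder than the paper's: if you accept the derivative--expectation interchange of Proposition~\ref{prop:nd}, the bulk estimate you already have finishes the proof and there is nothing further to bound. Conversely, the caution you exercise is not misplaced --- the first partials of $f_i$ do jump across $\{x_i=0\}$ by $p(\sum_{j\ne i}x_j^+)-C_i'(0)$, so the interchange is exactly the device that absorbs what you call $R$, and a fully rigorous version would have to justify it at those kinks rather than simply invoke it. One small correction: do not cite Proposition~\ref{prop:unique_ne} for uniqueness of $\bm\theta^\star$. That result is about the \emph{deterministic} Cournot game, whereas the fixed point of the policy-gradient flow is the Nash equilibrium of the \emph{stochastic} game; its uniqueness follows directly from the negative definiteness of $\hat{\bm G}+\hat{\bm G}^\top$ (equivalently, from your strong-monotonicity inequality), not from Proposition~\ref{prop:unique_ne}.
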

\begin{lemma}\label{lem:NE_conv2}
	Under the assumptions of (A1)-(A2) and suppose there are only two players, the policy gradient updates converge to the unique Nash equilibrium exponentially fast under all initial conditions.
\end{lemma}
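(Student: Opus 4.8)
The plan is to leverage Lemma~\ref{lem:bound}, so that both players' parameters may be assumed to live forever in the compact box $\mathcal B:=[\underline\theta_1,\ymax]\times[\underline\theta_2,\ymax]$, which is forward invariant and globally attracting under the gradient flow $\dot\theta_i=V_i(\bm\theta)$, $V_i:=\nabla_{\theta_i}J_i$; moreover Lemma~\ref{lem:bound} is proved by showing $V_i$ points strictly inward on $\partial\mathcal B$ (negative on the face $\theta_i=\ymax$, positive on $\theta_i=\underline\theta_i$). The whole question is then a planar dynamical system on $\mathcal B$, and I would attack it in four steps: (1) compute the $2\times2$ Jacobian $DV(\bm\theta)$ and isolate its structure; (2) show its off-diagonal entries are $\le 0$ everywhere on $\mathcal B$, i.e. the flow is a \emph{competitive} planar system (the ``strategic substitutes'' nature of Cournot competition); (3) show $DV$ has negative trace and positive determinant on all of $\mathcal B$, so $V$ has exactly one zero $\bm\theta^*$ there, which is the unique Nash equilibrium of the stochastic duopoly; (4) combine competitiveness, uniqueness, and the inward-pointing boundary behavior to conclude global convergence to $\bm\theta^*$, with the exponential rate coming from the (uniform) Hurwitzness of $DV$.

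For step (1): differentiating~\eqref{eqn:pg_cournot_gradient}, writing $\sigma=(\theta_1+X_1)^++(\theta_2+X_2)^+$ and $\mathbf 1_i=\mathbf 1(\theta_i+X_i\ge 0)$, the off-diagonal entry is $DV_{12}=\mathbb E[\mathbf 1_1\mathbf 1_2(p'(\sigma)+p''(\sigma)(\theta_1+X_1))]$ (and symmetrically $DV_{21}$), with no boundary contribution since the indicator $\mathbf 1_1$ does not depend on $\theta_2$; the diagonal entry does pick one up, $DV_{11}=\mathbb E[\mathbf 1_1(2p'(\sigma)+p''(\sigma)(\theta_1+X_1)-C_1''(\theta_1+X_1))]+f_1(-\theta_1)\big(\mathbb E[p((\theta_2+X_2)^+)]-C_1'(0)\big)$, the last term from differentiating the rectification indicator. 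So $DV$ equals the expectation, over the random ``active set'' $\{i:\theta_i+X_i\ge0\}$, of the pseudo-gradient Jacobian of the \emph{smooth} deterministic duopoly (diagonal $2p'+p''x_i-C_i''$, off-diagonal $p'+p''x_i$), \emph{plus} a diagonal rectification correction $f_i(-\theta_i)(\mathbb E[p((\theta_{-i}+X_{-i})^+)]-C_i'(0))$. Step (2) is then immediate: on $\mathbf 1_1\mathbf 1_2$ we have $\theta_i+X_i\ge 0$, $p'(\sigma)\le 0$, $p''(\sigma)\le 0$ by (A1), so $DV_{12},DV_{21}\le 0$, and the correction lives only on the diagonal.

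For step (3), the bulk part of the diagonal satisfies $a_0:=\mathbb E[\mathbf 1_1(2p'(\sigma)+\dots)]\le\mathbb E[2\mathbf 1_1 p'(\sigma)]\le -2c\,\delta<0$ uniformly on $\mathcal B$, where $c>0$ lower-bounds $|p'|$ on $[0,\ymax]$ and $\delta>0$ lower-bounds $\Pr(\mathbf 1_1=1,\ \sigma<\ymax)$ (finite because the noise has infinite support and $\theta_1$ lies in a bounded interval); and a short correlation estimate using $\mathbf 1_1\ge\mathbf 1_1\mathbf 1_2$ gives $|a_0|\ge|DV_{12}|+e$ and $|d_0|\ge|DV_{21}|+e$ with $e:=\mathbb E[\mathbf 1_1\mathbf 1_2|p'(\sigma)|]\ge c\delta>0$, hence bulk determinant $\ge e(|DV_{12}|+|DV_{21}|)+e^2>0$. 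Thus the bulk Jacobian is uniformly Hurwitz on $\mathcal B$. Given that the full field $V$ points strictly inward on $\partial\mathcal B$, its Poincar\'e index over $\mathcal B$ is $+1$, while every zero of $V$ has index $+1$ because its Jacobian is Hurwitz (a stable node or focus); hence $V$ has exactly one zero $\bm\theta^*\in\mathcal B$, necessarily the unique Nash equilibrium of the stochastic game. Step (4): a competitive planar flow has no nonconstant periodic orbits, so by the Poincar\'e--Bendixson theorem every orbit (all orbits are bounded, by Lemma~\ref{lem:bound}) has its $\omega$-limit set equal to an equilibrium; uniqueness forces convergence to $\bm\theta^*$, and the uniform Hurwitzness of $DV$ upgrades local linear convergence to an exponential rate.

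The step I expect to be the main obstacle is making step (3) robust to the rectification correction $f_i(-\theta_i)(\mathbb E[p((\theta_{-i}+X_{-i})^+)]-C_i'(0))$, which is not sign-definite and could in principle push a diagonal entry toward zero and spoil the negative-trace / positive-determinant conditions. The route through is quantitative: $f_i(-\theta_i)\le\sup_x f_i(x)<\infty$, and since $X_i$ is unimodal at its mean $0$ the coefficient $f_i(-\theta_i)$ is small once $\theta_i$ is bounded away from $0$, while the bulk diagonal and the bulk determinant are bounded away from $0$ by $-2c\delta$ and $e^2$ uniformly on $\mathcal B$; so the real work is, in the thin region where $f_i(-\theta_i)$ is not small (near $\theta_i=0$), to check that the factor $\mathbb E[p((\theta_{-i}+X_{-i})^+)]-C_i'(0)$ together with the surviving structure still leaves trace negative and determinant positive. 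This localized bookkeeping, carried out uniformly over the compact box, is the technical heart of the lemma --- and is precisely why the two-player case needs its own argument: in the linear-price case $p''\equiv 0$ deletes all the action-dependent terms, and the convergence proof can instead be run, for any number of players, through a symmetric negative-definite Jacobian rather than through planar competitive-systems theory.
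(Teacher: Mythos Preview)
Your plan is sound and would work, but it is considerably more elaborate than the paper's argument. The paper simply observes that the \emph{pointwise} $2\times 2$ Jacobian (before taking the expectation) is strictly row diagonally dominant with negative diagonal in each of the four sign cases for $(x_1,x_2)$: when both are nonnegative this is just $|p''x_i+2p'-C_i''|>|p''x_i+p'|$ (all four terms have the same sign under (A1)--(A2)), and the other three cases give diagonal or zero matrices. Strict diagonal dominance survives the expectation, Gershgorin's theorem then places the spectrum of $\hat G$ in the open left half-plane, and a cited result of Ratliff et al.\ converts this into exponential stability of the Nash equilibrium. There is no Poincar\'e index, no Poincar\'e--Bendixson, no appeal to competitive planar systems.

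What each route buys: your inequality $|a_0|\ge|DV_{12}|+e$ in step (3) \emph{is} row diagonal dominance, so you are rediscovering the paper's mechanism but packaging it through trace and determinant separately rather than invoking Gershgorin once. On the other hand, your step (4) genuinely delivers \emph{global} convergence (bounded orbits, no periodic orbits in a competitive planar flow, unique equilibrium by the index count), whereas the paper's cited stability result is, strictly speaking, only local. Finally, the rectification boundary term $f_i(-\theta_i)\big(\mathbb E[p((\theta_{-i}+X_{-i})^+)]-C_i'(0)\big)$ that you correctly isolate and flag as the main obstacle is simply absent from the paper's expression for $G_{ii}$: the paper differentiates the indicator-weighted expectation as if no Leibniz boundary contribution arose. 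So the obstacle you anticipate is real, but the paper sidesteps it by not writing it down; your more careful bookkeeping is in this sense an improvement, at the cost of the extra work you describe.
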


The proof of Lemma~\ref{lem:NE_conv1} leverages Rosen's  conditions in ~\cite{rosen1965existence}. A sufficient condition for the convergence of gradient-based algorithms in concave N-player games is that the game Hessian is \emph{negative definite}.
% Let $\bm{G}$ be the Hessian of the payoff functions, that is, $G_{ij}=\frac{\partial^2 J_i}{\partial \theta_i \partial \theta_j }$.
Therefore, we prove Lemma~\ref{lem:NE_conv1}  in two steps. First is to show that the stochastic Cournot games with assumptions of (A1)-(A2) are concave N-player game, and then show the game Hessian is negative definite under linear price functions.
However, once the price function is not linear, we cannot directly use Rosen's conditions for the convergence proof, even under the two-player case. The proof of Lemma~\ref{lem:NE_conv2} is based on a dynamical system interpretation. We proved that under the two-player general price function setup, the game Hessian is strictly diagonally dominant with all eigenvalues in the left-half plane, thus the Nash equilibrium is an exponentially stable fixed point.

We close this section with two remarks. Firstly, our proof provides the sufficient conditions for the convergence of policy gradient in Cournot games, that is either the price function is linear, or the player number is no more than two for general price function. However, these may not be necessary. We provide a three-player example with quadratic price function in Section~\ref{sec:investigative}, where we also observe convergence behavior. Secondly, it should be noted that in practice, some players may decide to not follow the policy gradient updates and use other learning algorithms (or act in adversarial manners). We provide some empirical evaluations of the system robustness in Section~\ref{sec:investigative}, by assuming a small portion of players is acting randomly. Both directions, 1) generalizing the convergence proof to a broader class of games and 2) dynamics under heterogeneous/adversarial learning agents are important as future works.

\section{Proof of Theorem~\ref{theorem: pg_conv}}
\label{sec:proof}
In this section, we prove the three major lemmas stated in the previous section.
\subsection{Proof of Lemma~\ref{lem:bound}}
Without loss of generality, we can consider player 1. Fix the other player's choices of $\theta$'s. Define the random variable $Y=\sum_{i=2}^N (\theta_i+X_i)^+$. We first prove that it is never beneficial for player 1 to set $\theta_1$ to a value larger than $\ymax$. Consider the derivative of $J_1$ with respect to $\theta_1$
\begin{align}
	g_1&= \frac{\partial }{\partial \theta_1} E\left[p\left((\theta_1+X_1)^+ + Y\right) (\theta_1+X_1)^+-C_1\left((\theta_1+X_1)^+\right)\right] \nonumber\\
	= & E\left[ 1(\theta_1+X_1 \geq 0) \left\{p'\left(\theta_1+X_1 + Y\right) (\theta_1+X_1)\right.\right. \nonumber\\
	&\left.\left.+p\left((\theta_1+X_1)^+ + Y\right) -C_1'(\theta_1+X_1) \right\}\right], \label{eqn:gi}
\end{align}

where $1(\cdot)$ is the indicator function. We want to show that if $\theta_1\geq \ymax$, the derivative is negative. The last term $-E\left[1(\theta_1+X_1 \geq 0)C_i'(\theta_1+X_1)\right]$ is negative because $C_i$ is strictly increasing. Now consider the first two terms, and let $f_Y$ be the density of $Y$,
\begin{align}
& E\left[ 1(\theta_1+X_1 \geq 0) \left\{p'\left(\theta_1+X_1 + Y\right) (\theta_1+X_1) +p\left(\theta_1+X_1 + Y\right) \right\}\right] \nonumber \\
&=  \int_0^\infty \int_0^\infty 1(\theta_1+x_1 \geq 0) \left\{p'\left(\theta_1+x_1 + y\right) (\theta_1+x_1)+p\left(\theta_1+x_1 + y\right) \right\}f_1(x) f_Y(y) dx dy \nonumber \\
&\stackrel{(a)}{=} \int_0^{\ymax} \int_{-\theta_1}^{\ymax-y-\theta_1} \left[p'\left(\theta_1+x_1 + y\right) (\theta_1+x_1)+p\left(\theta_1+x_1 + y\right)\right] f_1(x) f_Y(y) dx dy  \nonumber \\
&\stackrel{(b)}{=} \int_0^{\ymax} \int_0^{\ymax-y} \left[p'\left(x_1' + y\right) (x_1')+p\left(x_1' + y\right)\right] \cdot f_1(x'-\theta_1) f_Y(y) dx dy, \label{eqn:int_1}
\end{align}
where $(a)$ follows from assumption (A1) and $(b)$ from a change of variable from $x_1$ to $x_1'=\theta_1+x_1$. Next we show that for any given $y$, $\int_0^{\ymax-y} p'\left(x_1' + y\right) (x_1')+p\left(x_1' + y\right) dx=0$. Using the integration by parts on the first term, denoting $\bar{y} =\ymax-y$, we have
\begin{align*}
& \int_0^{\ymax-y} p'\left(x_1' + y\right) (x_1')+p\left(x_1' + y\right) dx \\
= & p(x_1'+y)x_1' \vert_{x_1'=0}^{x_1'=\bar{y}}- \int_0^{\bar{y}} p\left(x_1' + y\right) + \int_0^{\bar{y}} p\left(x_1' + y\right) \\
= & 0.
\end{align*}
By assumption (A1), $p'\left(x_1' + y\right) (x_1')+p\left(x_1' + y\right)$ is positive at $x_1'=0$. Therefore, it must undergo a sign change from positive to negative. However, by the unimodality assumption, $\theta_1 \geq \ymax$, $f_1 (x_1-\theta_1)$ is an increasing function on the interval $x_1 \in [0,\ymax-y]$. Therefore,  $\int_0^{\ymax-y} \left[p'\left(x_1' + y\right) (x_1')+p\left(x_1' + y\right)\right] f_1(x'-\theta_1) dx<0$ for all $y$ and this proves that player $1$ would never choose $\theta_1$ to be larger or equal to $\ymax$.

Now we show that there is a lower bound on $\theta_1$. The partial derivative of $g_1$ with respect to $\theta_j$ is
{\begin{small}
	\begin{align*}
	\frac{\partial g_1}{\partial \theta_j} &= E \left[ 1(\theta_1+X_1 \geq 0, \theta_j+X_j \geq 0) \cdot \left\{p''\left(\sum_{j=1}^N (\theta_j+X_j)^+\right)  \cdot (\theta_1+X_1)   +p'\left(\sum_{j=1}^N (\theta_j+X_j)^+\right) \right\}\right] < 0
	\end{align*}
\end{small}}
where the inequality follows from $p$ is strictly decreasing and concave. Similar calculations can be used to show that all cross partials are negative. Therefore, player $1$ should decrease $\theta_1$ as other players increase their parameters. From the first part of the proof, suppose all other players choose $\ymax$ as their play. Even at this choice, $g_1$ still becomes positive for negative enough $\theta_1$'s, therefore implying that the choice of $\theta_1$ is lower bounded by some real number $\underline{\theta}_1$.

\subsection{Proof of Lemma~\ref{lem:NE_conv1}}
Lemma~\ref{lem:bound} shows that the action space is convex and compact. Let $\bm{G}$ denote the Hessian of the game, so
\begin{equation*}
G_{ij}=\frac{\partial^2 J_i}{\partial \theta_i \partial \theta_j}= \frac{\partial g_i}{\partial \theta_j}.
\end{equation*}
Focusing on the diagonal terms, we have
\begin{small}
	\begin{align}
	G_{ii} &=\frac{\partial^2 J_i}{\partial \theta_i^2} = E \left[ 1(\theta_i+X_i \geq 0) \cdot \left\{p''\left(\sum_{j=1}^N (\theta_j+X_j)^+\right) \right.\right.\nonumber\\
	& \left.\left. \cdot (\theta_i+X_i)  +2p'\left(\sum_{j=1}^N (\theta_j+X_j)^+\right) -C_i''(\theta_i+X_i) \right\}\right] < 0,
	\end{align}
\end{small}
which is negative by assumptions (A1) and (A2). A game is said to be a \emph{concave N-player game}~\cite{rosen1965existence} if $G_{ii}<0, \forall i$ and the action space is convex and compact. Therefore, it is a concave N-player game. The following proposition is given in~\cite{rosen1965existence} as a sufficient condition to show when gradient-based algorithms converge to Nash equilibriums:
% In addition,~\cite{rosen1965existence} gives the sufficient convergence condition for gradient-based algorithms in concave N-player games. Let $\bm{G}$ be the game Hessian, that is, $G_{ij}=\frac{\partial^2 J_i}{\partial \theta_i \partial \theta_j }$, and we have
\begin{proposition}\label{prop: rosen_condition}
	Let $\bm{G}$ denote the Hessian of a concave N-player game. If $\bm{G}^T+\bm{G}$ is negative definite over the space of actions, there is a unique Nash equilibrium and the policy gradient dynamics approach it exponentially quickly for all initializations.
\end{proposition}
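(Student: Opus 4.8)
The plan is to follow Rosen's original argument~\cite{rosen1965existence}, specialized to the pseudo-gradient map $g(\bm\theta) = \bigl(\nabla_{\theta_1}J_1,\dots,\nabla_{\theta_N}J_N\bigr)$ on the compact convex box $\mathcal{K} = \prod_i [\underline\theta_i,\ymax]$ supplied by Lemma~\ref{lem:bound}. Three things must be established: existence of a Nash equilibrium, its uniqueness, and exponential convergence of the gradient dynamics. Existence is the easy part: each $J_i$ is continuous (indeed $C^2$, since the rectification kink in \eqref{eqn:pg_cournot_gradient} is integrated out against the bounded density $f_i$) and concave in $\theta_i$ because $G_{ii}<0$, while $\mathcal{K}$ is compact and convex, so the best-response correspondence satisfies the hypotheses of Kakutani's fixed-point theorem and an equilibrium $\bm\theta^\ast$ exists.

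For uniqueness I would pass through strict monotonicity of $g$ on $\mathcal{K}$ (Rosen's diagonal strict concavity). First I would write, for any $\bm\theta\neq\bm\theta'$ in $\mathcal{K}$, $g(\bm\theta)-g(\bm\theta') = \int_0^1 \bm G\bigl(\bm\theta'+s(\bm\theta-\bm\theta')\bigr)(\bm\theta-\bm\theta')\,ds$, and use $v^{\mathsf T}\bm G v = \tfrac12 v^{\mathsf T}(\bm G+\bm G^{\mathsf T})v < 0$ to conclude $(\bm\theta-\bm\theta')^{\mathsf T}\bigl(g(\bm\theta)-g(\bm\theta')\bigr) < 0$. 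If $\bm\theta^\ast$ and $\bar{\bm\theta}$ were distinct equilibria, the Nash/KKT first-order conditions at each one give $(\bar{\bm\theta}-\bm\theta^\ast)^{\mathsf T}g(\bm\theta^\ast)\le 0$ and $(\bm\theta^\ast-\bar{\bm\theta})^{\mathsf T}g(\bar{\bm\theta})\le 0$; summing these contradicts strict monotonicity, so $\bm\theta^\ast=\bar{\bm\theta}$.

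For convergence I would analyze the continuous-time projected gradient flow $\dot{\bm\theta} = \Pi_{\mathcal{K}}\bigl[\bm\theta,\,g(\bm\theta)\bigr]$ (of which the discrete policy-gradient update with small step size is an Euler discretization) and take the Lyapunov candidate $V(\bm\theta) = \tfrac12\|\bm\theta-\bm\theta^\ast\|^2$. Compactness of $\mathcal{K}$ together with continuity of the eigenvalues of $\bm G+\bm G^{\mathsf T}$ yields a uniform bound $v^{\mathsf T}(\bm G+\bm G^{\mathsf T})v \le -2\lambda\|v\|^2$ with $\lambda>0$. Combining the integral representation of $g(\bm\theta)-g(\bm\theta^\ast)$, the variational inequality $(\bm\theta-\bm\theta^\ast)^{\mathsf T}g(\bm\theta^\ast)\le 0$, and the fact that the projection term contributes a nonpositive amount to $\dot V$, I would obtain $\dot V \le (\bm\theta-\bm\theta^\ast)^{\mathsf T}\bigl(g(\bm\theta)-g(\bm\theta^\ast)\bigr) \le -2\lambda V$, hence $\|\bm\theta(t)-\bm\theta^\ast\| \le e^{-\lambda t}\|\bm\theta(0)-\bm\theta^\ast\|$ for every initialization.

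I expect the main obstacle to be the bookkeeping around the feasibility constraint: showing that the projection term in the gradient flow never cancels the Lyapunov decrease (it is harmless because $\mathcal{K}$ is convex and the projection obeys the obtuse-angle property, but this must be stated carefully), and, in the discrete-time version, choosing the step size small enough that the $O(\eta^2)$ remainder is dominated by the $-2\lambda\eta V$ term. A secondary technical point is justifying that $g$ is Lipschitz and $\bm G$ continuous on $\mathcal{K}$ despite the indicators in \eqref{eqn:pg_cournot_gradient}; this again follows from differentiating under the expectation using boundedness of the densities $f_i$.
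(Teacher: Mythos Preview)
The paper does not supply its own proof of this proposition; it is stated as a known sufficient condition and attributed directly to Rosen~\cite{rosen1965existence}, then used as a black box in the proof of Lemma~\ref{lem:NE_conv1}. Your sketch is a correct reconstruction of Rosen's argument (existence via Kakutani, uniqueness via diagonal strict concavity of the pseudo-gradient, exponential convergence via the Lyapunov function $V=\tfrac12\|\bm\theta-\bm\theta^\ast\|^2$ on the compact convex box), so it matches the paper's approach by reference and in fact goes further than the paper does explicitly.
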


Using Proposition~\ref{prop: rosen_condition}, it suffices for us to show that the negative definiteness of $\bm{G}^T+\bm{G}$ under the stochastic Cournot game. The main challenge of proving the negative definiteness lies in the \emph{expectation} term, and we need to relate the properties of the Hessian of a function to its expectation.  The following proposition tackles the aforementioned challenge and relates the Hessian property to its expectation.
\begin{proposition}\label{prop:nd}
	Let $f:\mathbb{R}^N\rightarrow \mathbb{R}$ be a continuous function and suppose that the first and second order partial derivatives exist for all points except possibly for a set of measure $0$. Let $\bm{G}$ be the Hessian of $f$, whenever it exists. Now consider the function $\hat{f}:\mathbb{R}^N\rightarrow \mathbb{R}$, where $\hat{f}(y)=E_{\bm{X}}{f(y+\bm X)}$ and $\bm{X}$ is random vector in $\mathbb{R}^N$, with continuous and bounded density function and infinite support. Let $\hat{\bm{G}}$ be the Hessian of $\hat{f}$. Then: i) If $\bm{G}^T+\bm{G}$ is negative semidefinite at all points where $\bm{G}$ exists, then $\hat{\bm{G}}^T+\hat{\bm{G}}$ is negative semidefinite.
	ii) If $\bm{G}^T+\bm{G}$ is negative definite for a set of measure larger than 0, then $\hat{\bm{G}}^T+\hat{\bm{G}}$ is negative definite.
\end{proposition}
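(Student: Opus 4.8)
The plan is to prove both statements by a pointwise argument on the quadratic form $v^\top(\hat{\bm G}^\top+\hat{\bm G})v$ for an arbitrary fixed vector $v\in\mathbb R^N$, reducing everything to the scalar function $h(t)=\hat f(y+tv)$ and exploiting that differentiation in $t$ commutes with the expectation over $\bm X$. First I would establish the interchange of expectation and differentiation: since $f$ is continuous, its second partials exist off a measure-zero set, and $\bm X$ has a bounded density, the translated argument $y+\bm X$ almost surely avoids the bad set for every fixed $y$, so $\hat f(y)=E_{\bm X}f(y+\bm X)$ has Hessian $\hat{\bm G}(y)=E_{\bm X}[\bm G(y+\bm X)]$ wherever the dominated-convergence / differentiation-under-the-integral hypotheses are met (boundedness of the density plus the growth control implicit in the Cournot setting where $f$ is eventually affine, hence has bounded second derivatives on the relevant compact region from Lemma~\ref{lem:bound}). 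With this in hand, $\hat{\bm G}^\top+\hat{\bm G} = E_{\bm X}[\bm G^\top+\bm G]$ at every point where $\hat{\bm G}$ exists.

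Then part (i) is immediate: for any $v$, $v^\top(\hat{\bm G}^\top+\hat{\bm G})v = E_{\bm X}\big[v^\top(\bm G(y+\bm X)^\top+\bm G(y+\bm X))v\big]$, and the integrand is $\le 0$ almost surely by hypothesis, so the expectation is $\le 0$; hence $\hat{\bm G}^\top+\hat{\bm G}$ is negative semidefinite. For part (ii), fix $v\ne 0$. By hypothesis there is a set $S$ of positive Lebesgue measure on which $\bm G^\top+\bm G$ is negative definite; on $S$ the continuous-in-effect quantity $q_v(z):=v^\top(\bm G(z)^\top+\bm G(z))v$ is strictly negative. The key point is that for fixed $y$, the law of $y+\bm X$ has a density that is strictly positive on all of $\mathbb R^N$ (this is where the \emph{infinite support} assumption, together with the density being, say, positive on its support as guaranteed by unimodality at the mean, is used), so $\Pr[y+\bm X\in S]>0$. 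Combined with part (i)'s pointwise bound $q_v(y+\bm X)\le 0$ a.s., we get $v^\top(\hat{\bm G}^\top+\hat{\bm G})v = E[q_v(y+\bm X)] < 0$ strictly, for every nonzero $v$; thus $\hat{\bm G}^\top+\hat{\bm G}$ is negative definite.

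I would be slightly careful about two technical points. First, "$\bm G^\top+\bm G$ negative semidefinite at all points where $\bm G$ exists" must be upgraded to an almost-everywhere statement that survives the translation, which is fine because the exceptional set has measure zero and $y+\bm X$ avoids it a.s.; I would state this explicitly. Second, in part (ii) one must confirm that the positive-measure set $S$ is not washed out — i.e. that $y+\bm X$ genuinely charges $S$ — which requires the density of $\bm X$ to be positive (a.e.) on a set whose translate covers $\mathbb R^N$; the stated hypotheses of infinite support plus (in the application) unimodality give a density positive on the whole line for each coordinate, hence a product-type or otherwise everywhere-positive density, so $\Pr[y+\bm X\in S]=\int_S f_{\bm X}(z-y)\,dz>0$ whenever $|S|>0$.

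The main obstacle I expect is not the algebra but making the differentiation-under-the-integral rigorous without extra regularity on $f$: $f$ here is only $C^0$ with second derivatives existing off a null set (because of the rectification $(\cdot)^+$ kinks), so I cannot naively differentiate twice inside the integral. The clean fix is to argue at the level of the scalar map $t\mapsto \hat f(y+tv)$: show it is $C^2$ in $t$ by a difference-quotient / Leibniz argument using only first differences of $f$ (which exist everywhere by continuity) and the boundedness of the density to dominate, obtaining $\frac{d^2}{dt^2}\hat f(y+tv)\big|_{t=0}=E_{\bm X}[v^\top \bm G(y+\bm X)v]$ directly — this sidesteps having to define $\hat{\bm G}$ entrywise and is where the bulk of the care goes; once that identity is in place, both (i) and (ii) follow as above.
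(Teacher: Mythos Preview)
Your proposal is correct and follows essentially the same route as the paper: interchange differentiation and expectation to get $\hat{\bm G}(y)=E_{\bm X}[\bm G(y+\bm X)]$, then test the quadratic form $v^\top(\hat{\bm G}^\top+\hat{\bm G})v$ and use nonpositivity of the integrand for part (i) and the positive-measure set together with the everywhere-positive density for strictness in part (ii). If anything, you are considerably more careful than the paper about the technical justifications (differentiation under the integral, why $y+\bm X$ charges the set $S$), which the paper simply asserts in a couple of lines.
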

\begin{proof}
	The proof of this proposition is straightforward. By the assumption on the random vector $\bm X$, we can switch the order of differentiation and the expectation. In addition, the density being continuous allows us to ignore the points where $\bm{G}$ does not exist. Then
	\begin{align*}
	& \bm{v}^T (\bm{{G}}^T(\bm y)+\bm{{G}}(\bm y)) \bm {v} \\
	&= E_{\bm{X}} \left[\bm{v}^T (\bm{G}^T(\bm{y}+\bm X)+\bm H(\bm{y}+\bm X)) \bm {v}\right]  \leq 0,
	\end{align*}
	for any $\bm v$. Now suppose $\bm{G}^T(\bm{y}+\bm x)+\bm G(\bm{y}+\bm x))$ is negative definite for set of positive measure, then by the continuity of the density function, $ \bm{v}^T (\bm{{G}}^T(\bm y)+\bm{{G}}(\bm y)) \bm {v}<0$ for all nonzero $\bm v$ and $\hat{\bm{G}}^T+\hat{\bm{G}}$ is negative definite.
\end{proof}

Let $f_i(\bm{x})=p(\sum_i x_i^+) x_i^+$, which is continuous and twice differentiable except for a measure zero set on $\mathbb{R}^N$. Since $J_i=E[f_i(\bm{\theta}+\bm X)]$, we need to show $f=\begin{bmatrix} f_1 \dots f_N \end{bmatrix}$ satisfies the condition of Proposition \ref{prop:nd}. Given a vector $\bm x$, without loss of generality, assume that $x_1, \dots,x_k \geq 0$ and $x_{k+1},\dots,x_N<0$. The second order derivatives of $f$ are,
\begin{small}
	\begin{align*}
	\frac{\partial^2 f_i}{\partial x_i \partial x_j} = & 1\left(x_i \geq 0\right) \cdot
	\begin{cases} p''(\sum_l x_l^+) x_i + 2 p'(\sum_l x_l^+) - C_i''(x_i), i=j \\
	1(x_j \geq 0) \left(p''(\sum_l x_l^+) x_i + p'(\sum_l x_l^+)\right), i \neq j
	\end{cases}.
	\end{align*}
\end{small}

Because of the indicator on both $x_i \geq 0$ and $x_j \geq 0$, the Hessian is only nonzero for the \emph{upper left block}. In this block, we have $\forall i, j \leq k$,
\begin{equation*}
\frac{\partial^2 f_i}{\partial x_i \partial x_j} =
\begin{cases} p''(\sum_{l=1}^k x_l) x_i + 2 p'(\sum_{l=1}^k x_l)- C_i''(x_i), i=j\\
p''(\sum_{l=1}^k x_l) x_i +  p'(\sum_{l=1}^k x_l), i \neq j,
\end{cases}.
\end{equation*}

When the price function is linear, the second order derivative term vanishes, i.e. $p''(\sum_{l=1}^{k} x_l)  =0$. Therefore, we have,
\begin{equation*}
\frac{\partial^2 f_i}{\partial x_i \partial x_j} =
\begin{cases} 2 p'(\sum_{l=1}^k x_l)- C_i''(x_i) \mbox{ if } i=j, i, j \leq k \\
p'(\sum_{l=1}^k x_l) \mbox{ if } i \neq j, i, j \leq k
\end{cases}
\end{equation*}
Now we can write $\bm{G}$ as $\bm{G}_1+\bm{G}_2+\bm{G}_3$ where
\begin{small}
	\begin{align}\label{eq:g1}
	\bm{G}_1 = \left[
	\begin{array}{c|c}
	\begin{bmatrix}p'(\sum_{l=1}^k x_l) &  \cdots & p'(\sum_{l=1}^k x_l) \\
	\vdots & \ddots & \vdots \\
	p'(\sum_{l=1}^k x_l) &  \cdots & p'(\sum_{l=1}^k x_l)
	\end{bmatrix}& \begin{bmatrix} 0 & \cdots & 0 \\
	\vdots & \ddots & \vdots\\
	0 & \cdots & 0
	\end{bmatrix}\\
	\hline
	\begin{bmatrix} 0 & \cdots & 0 \\
	\vdots & \ddots & \vdots\\
	0 & \cdots & 0
	\end{bmatrix}& \begin{bmatrix} 0 & \cdots & 0 \\
	\vdots & \ddots & \vdots\\
	0 & \cdots & 0
	\end{bmatrix}
	\end{array}
	\right]
	\end{align}
\end{small}

Both $\bm{G}_2, \bm{G}_3$ are diagonal matrices. For $\bm{G}_2$, it has the $i$'th component being $p'(\sum_{l=1}^k x_l)$ for $i\leq k$ and $0$ for $i>k$. Since $X_i$ have infinite support, there exists cases where $k=N$ (all player sample non-negative actions) for a \emph{measure larger than 0 set}, in which $\bm{G}_2$ is \emph{negative definite}. For $\bm{G}_3$, it has the $i$'th component being $-C_i''(x_i) \leq 0$ for $i\leq k$ and $0$ for $i>k$, thus it is negative semi-definite. Therefore, it suffices for us to show the negative semi-definess of $\bm{G}_1$.

The eigenvalues of $\bm{G}_1$ in Eq.~\eqref{eq:g1} are the combination of eigenvalues of the upper left and lower-right matrices. Eigenvalues of the upper left matrix are $kp'(\sum_{l=1}^k x_l) <0$ and 0 ($k-1$ repeats), and eigenvalues of the lower right are all zeros. Thus $\bm{G}_1$ is negative semi-definite.

Therefore, there exists an measure nonzero set of actions, such that $\bm{G} = \bm{G}_1+\bm{G}_2+\bm{G}_3$ is negative definite. By Proposition~\ref{prop:nd}, we have the Hessian of $(J_1, ..., J_N)$, that is $\hat{\bm{G}}$ is negative definite.

\subsection{Proof of Lemma~\ref{lem:NE_conv2}}
Now, consider the two-player Cournot games with general price function $p(\cdot)$ under assumption (A1) and (A2).
There are four cases considering the positiveness of $x_1$ and $x_2$.
\begin{enumerate}[label=\alph*)]
	\item  $x_1, x_2 \geq  0$: $$\bm{G}_a = \begin{cases} p''(x_1 + x_2) x_i + 2p'(x_1 + x_2)-C_i''(x_i), i=j, \\
	p''(x_1 + x_2) x_i +  p'(x_1 + x_2), i \neq j,
	\end{cases}$$
	\item  $x_1 < 0, x_2 \geq  0$:
	$$\bm{G}_b = \begin{bmatrix}
	0 & 0\\
	0 & p''(x_1 + x_2) x_2 + 2p'(x_1 + x_2)
	\end{bmatrix}$$
	\item  $x_1 \geq 0, x_2 <  0$: $$\bm{G}_c = \begin{bmatrix}
	p''(x_1 + x_2) x_1 + 2p'(x_1 + x_2) & 0\\
	0 &  0
	\end{bmatrix}$$
	\item  $x_1 < 0, x_2 <  0$: $$\bm{G}_d = \begin{bmatrix}
	0 & 0\\
	0 & 0
	\end{bmatrix}$$
\end{enumerate}
The game Hessian matrix thus follows,
\begin{align}\label{eqn: hessian_twoplayer}
\hat{\bm{G}} &= E\left[1(x_1, x_2 \geq  0) \bm{G}_a + 1(x_1 < 0, x_2 \geq  0) \bm{G}_b \right. \nonumber\\
& \left.+ 1(x_1 \geq 0, x_2 <  0)\bm{G}_c + 1(x_1 < 0, x_2 <  0) \bm{G}_d \right]\,,
\end{align}

$\bm{G}_a$ is a strictly diagonally dominant matrix since the magnitude of the diagonal entry is strictly larger than the \emph{sum} of the magnitudes of all the other (non-diagonal) entries in each row, i.e., $|p''(x_1 + x_2) x_i + 2p'(x_1 + x_2)-C_i''(x_i)| > |p''(x_1 + x_2) x_i +  p'(x_1 + x_2)|, \forall i$. Given that $\bm{G}_b, \bm{G}_c, \bm{G}_d$ are all diagonally dominant matrices, $\hat{\bm{G}}$  in Eq.~\eqref{eqn: hessian_twoplayer} is a strictly diagonally dominant matrix .
Therefore, the eigenvalues of matrix $\hat{\bm{G}}$ are all in the left–half plane (i.e., the real parts of eigenvalues are negative) by the Gershgorin circle theorem~\cite{weisstein2003gershgorin}.
Proposition 4 in~\cite{ratliff2013characterization}  showed that when all eigenvalues of the Hessian are in the open left–half plane, then the Nash equilibrium is an exponentially stable fixed point of the dynamical system generated by the gradient descend algorithm.

\section{Numerical experiments}
\label{sec:results}
In this section, we exam the performance of policy gradient algorithms in various of Cournot games. We first verify the convergence behavior under linear price and two-player cases. Next, we provide investigative studies on the system behavior under multi-player and players with random actions scenarios is well as intuitions for the robustness behavior.
\subsection{Experiment Setup}
We perform all the experiments using the natural policy gradient algorithm~\cite{kakade2002natural} with a Gaussian policy.  Following the derivations in Section~\ref{section:pg}, the gradient with respect to the policy parameter $\theta_i$ follows,
\begin{align}
\nabla_{\theta_i} J_i(\theta_i) & =E_x[\pi_i(x_i, x_{-i}) \nabla_{\theta_i} \log f_{\theta_i} (x_i)] \nonumber\\
& = \frac{1}{N} \sum_{i=1}^{N} \hat{\pi}_i \nabla_{\theta_i} \log f_{\theta_i} (x_i)\,,
\end{align}
where $\pi_i(x_i, x_{-i})$ is the payoff function of player $i$ and $\hat{\pi}_i$ is the observed payoff.
In the above formula, $f_{\theta_i}(x_i) = \theta_i + X_i$ is the decision making policy for player $i$ and $X_i \sim N(0, \sigma_i)$. For the action, we have $x_i = (\mu_i + X_i)^{+}$, where the action is truncated to be non-negative. The update rules for $\mu_i$ follows the natural policy gradient in ~\cite{kakade2002natural}.
%\begin{align}
%\mu_{i, t+1} &= \mu_{i, t} + \alpha_t \hat{\pi}_{i, t}(x_{i, t} - \mu_{i, t})\,,\\
%\sigma_{i, t+1} &=\sigma_{i, t} + \beta_t \hat{\pi}_{i, t} \frac{(x_{i, t} - \mu_{i, t})^2 - \sigma_{i, t}^2}{\sigma_{i, t}}.
%\end{align}
We choose the standard deviation for each player the same as $\sigma = 0.05$. All experiments are run using a 2.2 GHz Intel Core i7 Macbook Pro with 16 GB memory.

\subsection{Cournot Game Examples}
In this section, we verify the convergence behavior of the proposed algorithm in four example Cournot games, with different price and individual cost settings. $\textbf{G1:}$ three-player with linear price function $p(\bm{x}) = 1-(x_1+x_2+x_3)$ and no individual cost $C_i(x_i)=0, \forall i$. The Nash equilibrium is $x_1^{*} = x_2^{*} = x_3^{*} = \frac{1}{4}$. $\textbf{G2:}$ three-player with linear price function $p(\bm{x}) = 1-(x_1+x_2+x_3)$ and differnet individual cost $C_i(x_i)= 0.1  \cdot i \cdot x_i$ for player $i$. The Nash equilibrium is $x_1^{*} = 0.3, x_2^{*} = 0.2, x_3^{*} = 0.1$. $\textbf{G3:}$ two-player quadratic price function $p(\bm{x}) = 1-(x_1+x_2)^2$ without cost. The Nash equilibrium is $x_1^{*} = x_2^{*} = \sqrt{1/8} \approx 0.3536$. $\textbf{G4:}$ two-player cubic price function $p(\bm{x}) = 1-\frac{1}{2}(x_1+x_2)^3$ without cost. The Nash equilibrium is $x_1^{*} = x_2^{*} = \sqrt[3]{1/20} \approx 0.3684$.
In all of the games, each player simultaneously picks a production level. The price is determined by the sum of productions and broadcasted back to all players. This game is repeated multiple times with all players use policy gradient to learn and act. The dynamics of the policy parameter (i.e., the mean) are plotted in Figure~\ref{fig:convergence}. In all simulated games with different initializations and settings, the policy parameters converge to the Nash equilibrium, which verifies the theoretical results in Section~\ref{sec:theorem}.
\begin{figure}
	\centering
	\begin{subfigure}[b]{0.42\textwidth}
		\centering
		\includegraphics[width=\textwidth]{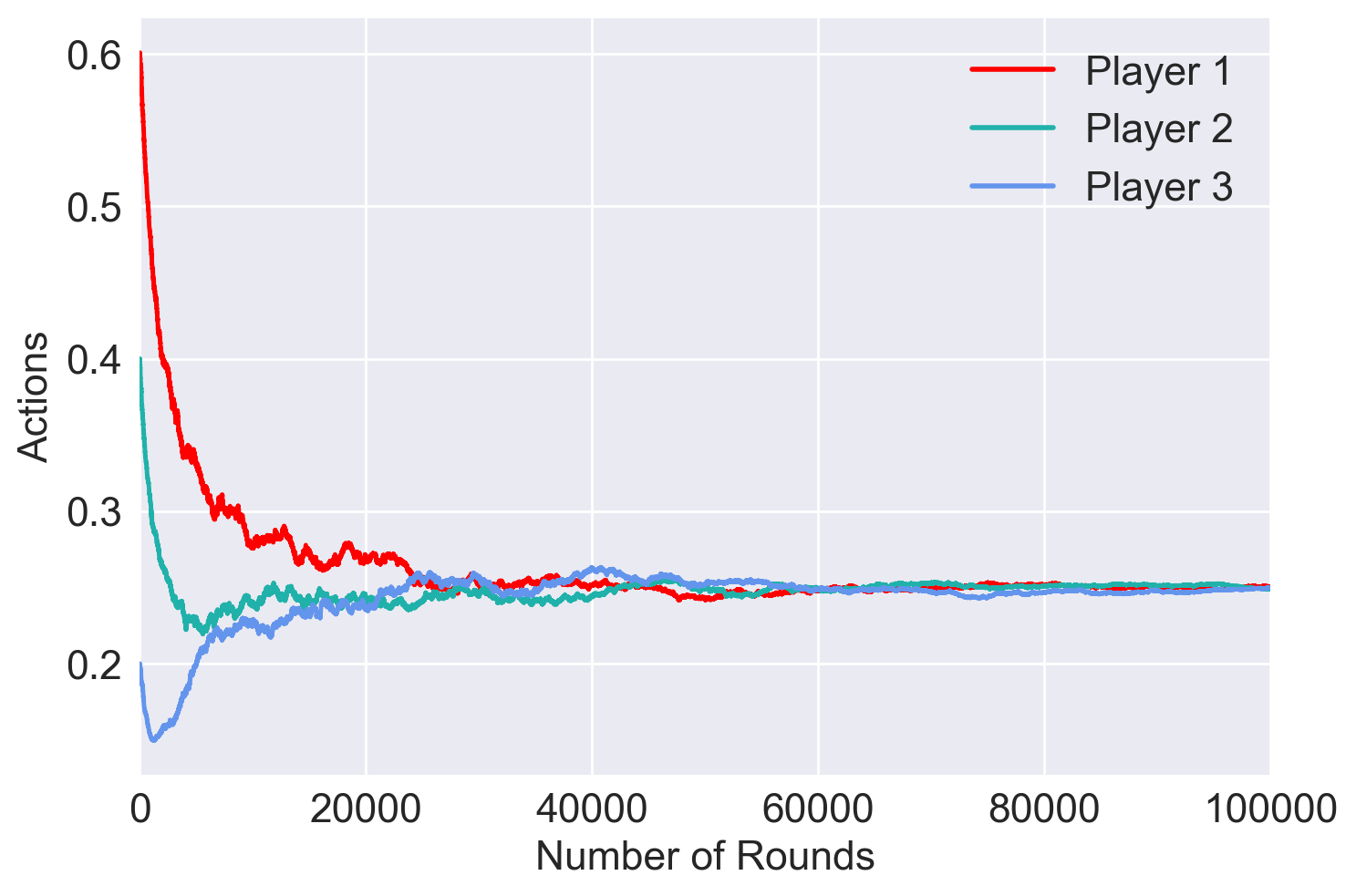}
		\caption{G1}
		\label{fig:g1}
	\end{subfigure}
	\begin{subfigure}[b]{0.42\textwidth}
		\centering
		\includegraphics[width=\textwidth]{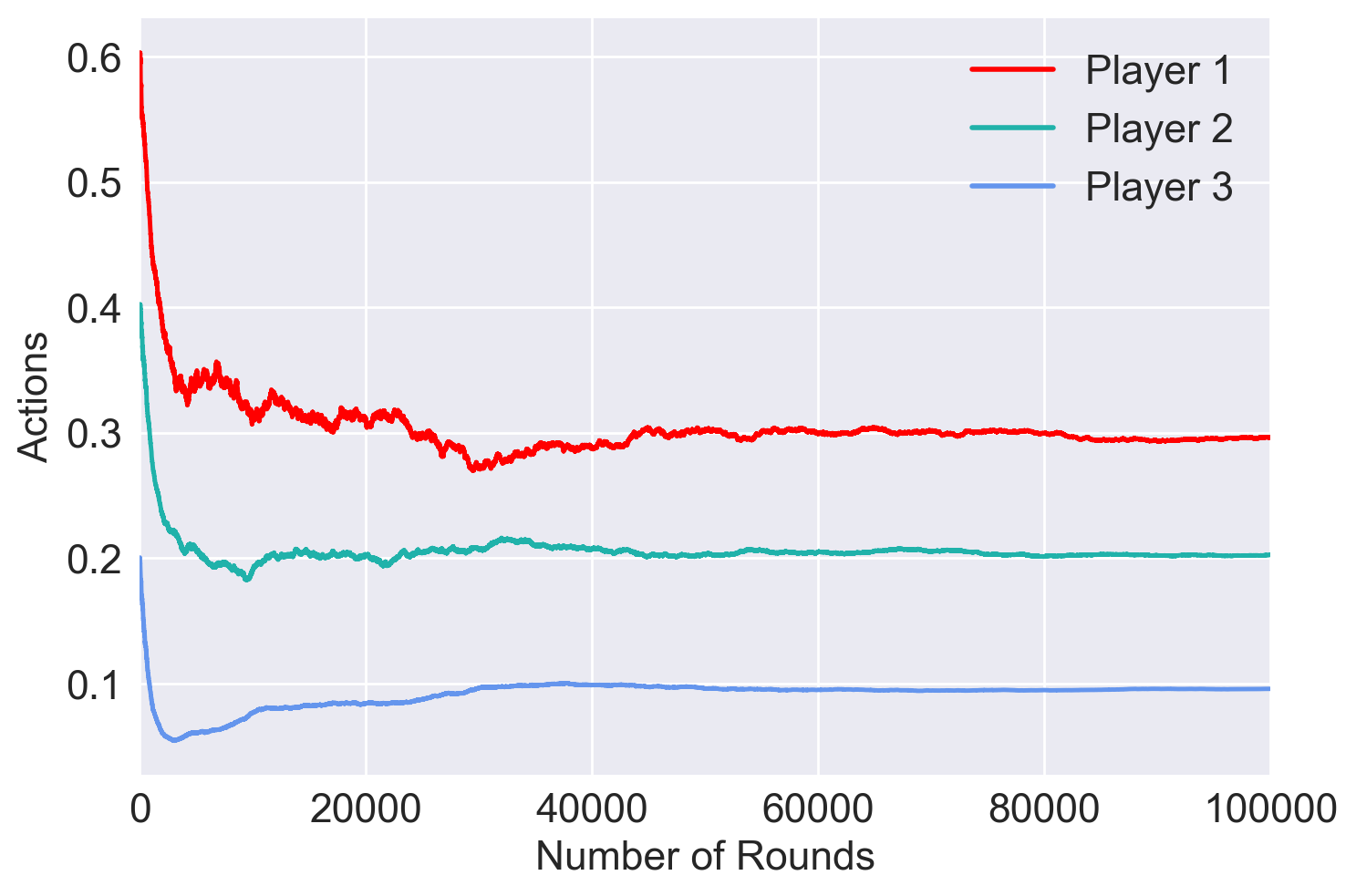}
		\caption{G2}
		\label{fig:g2}
	\end{subfigure}
	\hfill
	\begin{subfigure}[b]{0.42\textwidth}
		\centering
		\includegraphics[width=\textwidth]{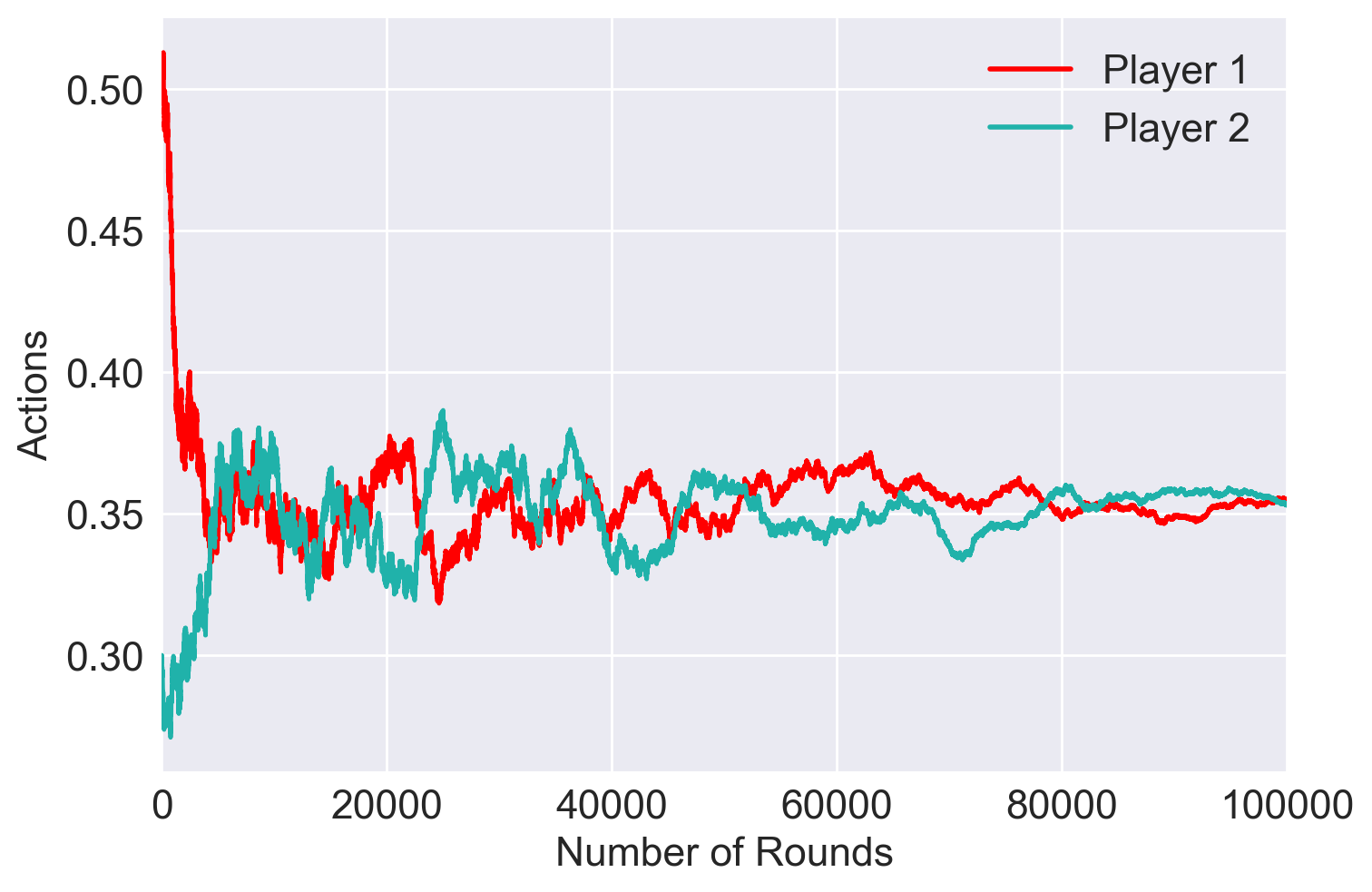}
		\caption{G3}
		\label{fig:g3}
	\end{subfigure}
	\begin{subfigure}[b]{0.42\textwidth}
		\centering
		\includegraphics[width=\textwidth]{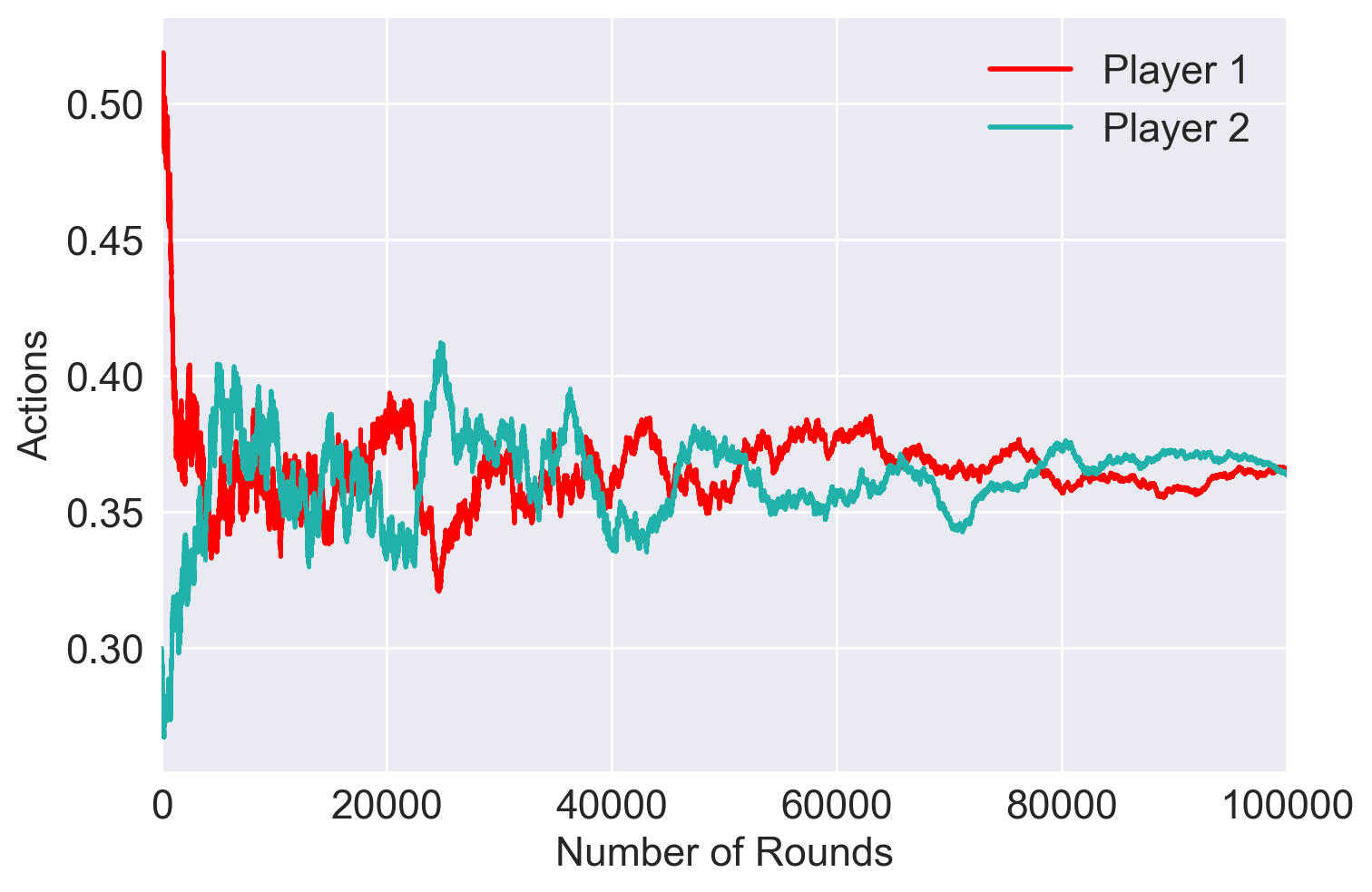}
		\caption{G4}
		\label{fig:g4}
	\end{subfigure}
	\caption{Convergence behavior of policy gradient in stochastic Cournot games: (a)-(b) are games with linear price and (c)-(d) are two-player games with general price functions.}
	\label{fig:convergence}
\end{figure}

\subsection{Investigative Studies}
\label{sec:investigative}
In this section, we provide two investigative studies relating to system performance under more general setups: 1) multi-agent Cournot game with non-linear price function; 2) hetergenous players that do not follow policy gradient updates. Note that our theoretical result in Section~\ref{sec:theorem} does not apply to the following two cases. $\textbf{G5:}$ three-player with quadratic price function $p(\bm{x}) = 1-(x_1+x_2+x_3)^2$ and no cost. $\textbf{G6:}$ three-player with linear price function $p(\bm{x}) = 1-(x_1+x_2+x_3)$ and no individual cost. One player does not follow policy gradient updates.
\begin{figure}
	\centering
	\begin{subfigure}[b]{0.42\textwidth}
		\centering
		\includegraphics[width=\textwidth]{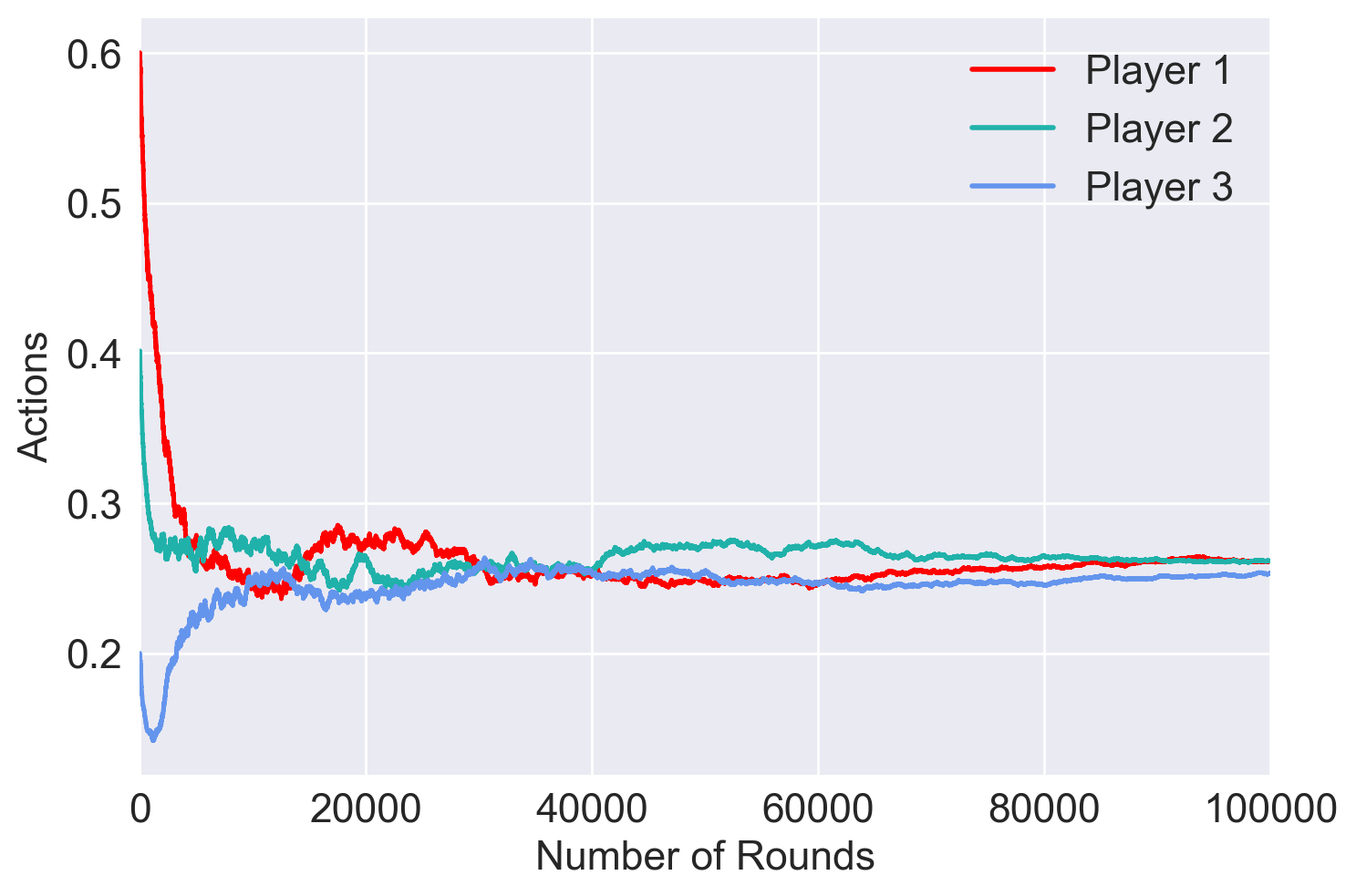}
		\caption{G5}
	\end{subfigure}
	\begin{subfigure}[b]{0.42\textwidth}
		\centering
		\includegraphics[width=\textwidth]{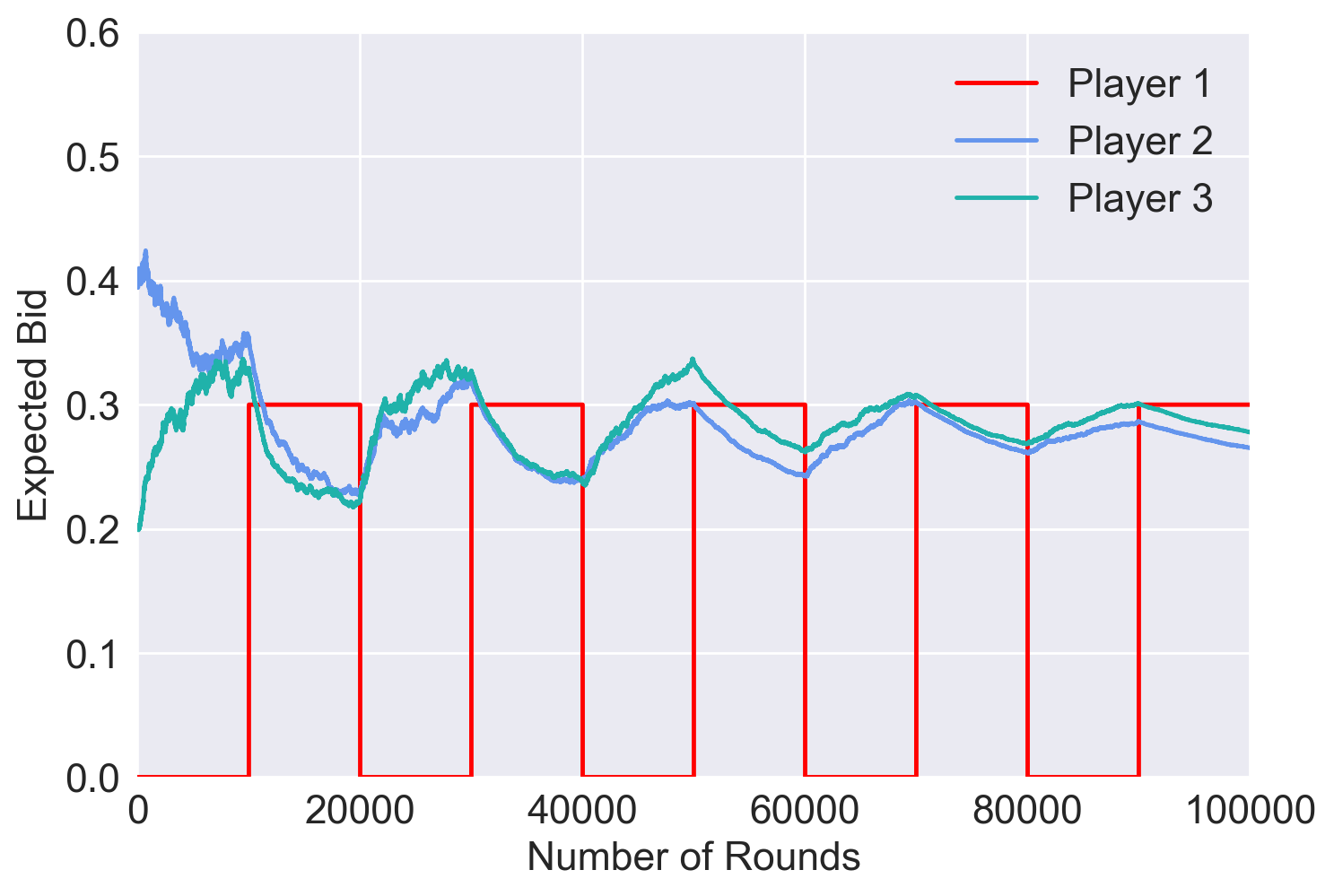}
		\caption{G6}
		\label{fig:g6}
	\end{subfigure}
	\caption{Dynamics of policy gradient beyond the convergence condition provided in Theorem 1.}
	\label{fig:robustness}
\end{figure}

Fig~\ref{fig:robustness} shows that both the three-player general price and heterogeneous players cases also converge to some equilibria, though they do not satisfy the convergence conditions in Theorem~\ref{theorem: pg_conv}, 
These results are promising in the sense that our results might be able to generalize to a broader class of games, and theoretically proving these would be valuable future work. There may also be settings where players are malicious, but designing optimal adversarial tactics and the detection algorithms, by themselves are topics that contain a vast body of literature and is beyond the scope of this work.

\section{Conclusion}
\label{sec:conslusion}
In this paper, we study the interaction of strategic players in Cournot games with limited feedback. We proved the convergence of policy gradient reinforcement learning to the Nash equilibrium, where player's policy is parameterized by the mean, under two conditions: either the price function is linear or there are two players.  Extending the results to more general conditions such as multi-player general price functions would be an important future direction.

\bibliographystyle{IEEEtran}
\bibliography{refs}

% Generated by IEEEtran.bst, version: 1.14 (2015/08/26)
\begin{thebibliography}{10}
\providecommand{\url}[1]{#1}
\csname url@samestyle\endcsname
\providecommand{\newblock}{\relax}
\providecommand{\bibinfo}[2]{#2}
\providecommand{\BIBentrySTDinterwordspacing}{\spaceskip=0pt\relax}
\providecommand{\BIBentryALTinterwordstretchfactor}{4}
\providecommand{\BIBentryALTinterwordspacing}{\spaceskip=\fontdimen2\font plus
\BIBentryALTinterwordstretchfactor\fontdimen3\font minus
  \fontdimen4\font\relax}
\providecommand{\BIBforeignlanguage}[2]{{%
\expandafter\ifx\csname l@#1\endcsname\relax
\typeout{** WARNING: IEEEtran.bst: No hyphenation pattern has been}%
\typeout{** loaded for the language `#1'. Using the pattern for}%
\typeout{** the default language instead.}%
\else
\language=\csname l@#1\endcsname
\fi
#2}}
\providecommand{\BIBdecl}{\relax}
\BIBdecl

\bibitem{mnih2015human}
V.~Mnih, K.~Kavukcuoglu, D.~Silver, A.~A. Rusu, J.~Veness, M.~G. Bellemare,
  A.~Graves, M.~Riedmiller, A.~K. Fidjeland, G.~Ostrovski \emph{et~al.},
  ``Human-level control through deep reinforcement learning,'' \emph{Nature},
  vol. 518, no. 7540, pp. 529--533, 2015.

\bibitem{brown2019superhuman}
N.~Brown and T.~Sandholm, ``Superhuman ai for multiplayer poker,''
  \emph{Science}, vol. 365, no. 6456, pp. 885--890, 2019.

\bibitem{lillicrap2016continuous}
T.~P. Lillicrap, J.~J. Hunt, A.~Pritzel, N.~Heess, T.~Erez, Y.~Tassa,
  D.~Silver, and D.~Wierstra, ``Continuous control with deep reinforcement
  learning,'' \emph{International Conference on Learning Representations
  (ICLR)}, 2016.

\bibitem{schulman2015trust}
J.~Schulman, S.~Levine, P.~Abbeel, M.~Jordan, and P.~Moritz, ``Trust region
  policy optimization,'' in \emph{International conference on machine learning
  (ICML)}, 2015, pp. 1889--1897.

\bibitem{mankowitz2020robust}
D.~J. Mankowitz, N.~Levine, R.~Jeong, A.~Abdolmaleki, J.~T. Springenberg,
  T.~Mann, T.~Hester, and M.~Riedmiller, ``Robust reinforcement learning for
  continuous control with model misspecification,'' \emph{International
  Conference on Learning Representations (ICLR)}, 2020.

\bibitem{shalev2016safe}
S.~Shalev-Shwartz, S.~Shammah, and A.~Shashua, ``Safe, multi-agent,
  reinforcement learning for autonomous driving,'' \emph{arXiv preprint
  arXiv:1610.03295}, 2016.

\bibitem{sallab2017deep}
A.~E. Sallab, M.~Abdou, E.~Perot, and S.~Yogamani, ``Deep reinforcement
  learning framework for autonomous driving,'' \emph{Electronic Imaging}, vol.
  2017, no.~19, pp. 70--76, 2017.

\bibitem{zhang2019multi}
K.~Zhang, Z.~Yang, and T.~Ba{\c{s}}ar, ``Multi-agent reinforcement learning: A
  selective overview of theories and algorithms,'' \emph{arXiv preprint
  arXiv:1911.10635}, 2019.

\bibitem{oroojlooyjadid2019review}
A.~OroojlooyJadid and D.~Hajinezhad, ``A review of cooperative multi-agent deep
  reinforcement learning,'' \emph{arXiv preprint arXiv:1908.03963}, 2019.

\bibitem{littman1994markov}
M.~L. Littman, ``Markov games as a framework for multi-agent reinforcement
  learning,'' in \emph{Machine learning proceedings 1994}.\hskip 1em plus 0.5em
  minus 0.4em\relax Elsevier, 1994, pp. 157--163.

\bibitem{schafer2019competitive}
F.~Sch{\"a}fer and A.~Anandkumar, ``Competitive gradient descent,'' in
  \emph{Advances in Neural Information Processing Systems (NeurIPS)}, 2019, pp.
  7623--7633.

\bibitem{crandall2005learning}
J.~W. Crandall and M.~A. Goodrich, ``Learning to compete, compromise, and
  cooperate in repeated general-sum games,'' in \emph{Proceedings of the 22nd
  International Conference on Machine learning (ICML)}, 2005, pp. 161--168.

\bibitem{dixit2015games}
A.~K. Dixit and S.~Skeath, \emph{Games of Strategy: Fourth International
  Student Edition}.\hskip 1em plus 0.5em minus 0.4em\relax WW Norton \&
  Company, 2015.

\bibitem{cournot1838recherches}
A.~A. Cournot, \emph{Recherches sur les principes math{\'e}matiques de la
  th{\'e}orie des richesses}, 1838.

\bibitem{kirschen2018fundamentals}
D.~S. Kirschen and G.~Strbac, \emph{Fundamentals of power system
  economics}.\hskip 1em plus 0.5em minus 0.4em\relax John Wiley \& Sons, 2018.

\bibitem{bimpikis2019cournot}
K.~Bimpikis, S.~Ehsani, and R.~{\.I}lk{\i}l{\i}{\c{c}}, ``Cournot competition
  in networked markets,'' \emph{Management Science}, vol.~65, no.~6, pp.
  2467--2481, 2019.

\bibitem{chletsos2019hospitals}
M.~Chletsos and A.~Saiti, ``Hospitals as suppliers of healthcare services,'' in
  \emph{Strategic Management and Economics in Health Care}.\hskip 1em plus
  0.5em minus 0.4em\relax Springer, 2019, pp. 179--205.

\bibitem{shapiro1989theories}
C.~Shapiro, ``Theories of oligopoly behavior,'' \emph{Handbook of Industrial
  Organization}, vol.~1, pp. 329--414, 1989.

\bibitem{kannan2012distributed}
A.~Kannan and U.~V. Shanbhag, ``Distributed computation of equilibria in
  monotone nash games via iterative regularization techniques,'' \emph{SIAM
  Journal on Optimization}, vol.~22, no.~4, pp. 1177--1205, 2012.

\bibitem{daughety2005cournot}
A.~F. Daughety, \emph{Cournot oligopoly: characterization and
  applications}.\hskip 1em plus 0.5em minus 0.4em\relax Cambridge university
  press, 2005.

\bibitem{nadav2010no}
U.~Nadav and G.~Piliouras, ``No regret learning in oligopolies: cournot vs.
  bertrand,'' in \emph{International Symposium on Algorithmic Game Theory
  (EC)}.\hskip 1em plus 0.5em minus 0.4em\relax Springer, 2010, pp. 300--311.

\bibitem{mertikopoulos2019learning}
P.~Mertikopoulos and Z.~Zhou, ``Learning in games with continuous action sets
  and unknown payoff functions,'' \emph{Mathematical Programming}, vol. 173,
  no. 1-2, pp. 465--507, 2019.

\bibitem{shi2019learning}
Y.~Shi and B.~Zhang, ``No-regret learning in cournot games,'' \emph{arXiv
  preprint arXiv:1906.06612}, 2019.

\bibitem{roughgarden2010algorithmic}
T.~Roughgarden, ``Algorithmic game theory,'' \emph{Communications of the ACM},
  vol.~53, no.~7, pp. 78--86, 2010.

\bibitem{arora2012multiplicative}
S.~Arora, E.~Hazan, and S.~Kale, ``The multiplicative weights update method: a
  meta-algorithm and applications,'' \emph{Theory of Computing}, vol.~8, no.~1,
  pp. 121--164, 2012.

\bibitem{hazan2016introduction}
\BIBentryALTinterwordspacing
E.~Hazan, ``Introduction to online convex optimization,'' \emph{Found. Trends
  Optim.}, vol.~2, no. 3-4, pp. 157--325, Aug. 2016. [Online]. Available:
  \url{https://doi.org/10.1561/2400000013}
\BIBentrySTDinterwordspacing

\bibitem{kalai2005efficient}
A.~Kalai and S.~Vempala, ``Efficient algorithms for online decision problems,''
  \emph{Journal of Computer and System Sciences}, vol.~71, no.~3, pp. 291--307,
  2005.

\bibitem{chen2018optimal}
Y.~Chen, Y.~Shi, and B.~Zhang, ``Optimal control via neural networks: A convex
  approach,'' in \emph{International Conference on Learning Representations
  (ICLR)}, 2019.

\bibitem{leslie2005individual}
D.~S. Leslie and E.~J. Collins, ``Individual q-learning in normal form games,''
  \emph{SIAM Journal on Control and Optimization}, vol.~44, no.~2, pp.
  495--514, 2005.

\bibitem{rodrigues2009dynamic}
E.~Rodrigues~Gomes and R.~Kowalczyk, ``Dynamic analysis of multiagent
  q-learning with $\varepsilon$-greedy exploration,'' in \emph{Proceedings of
  the 26th International Conference on Machine Learning (ICML)}, 2009, pp.
  369--376.

\bibitem{arslan2016decentralized}
G.~Arslan and S.~Y{\"u}ksel, ``Decentralized q-learning for stochastic teams
  and games,'' \emph{IEEE Transactions on Automatic Control}, vol.~62, no.~4,
  pp. 1545--1558, 2016.

\bibitem{johari2005efficiency}
R.~Johari and J.~N. Tsitsiklis, ``Efficiency loss in cournot games,''
  \emph{Harvard University}, 2005.

\bibitem{szidarovszky1977new}
F.~Szidarovszky and S.~Yakowitz, ``A new proof of the existence and uniqueness
  of the cournot equilibrium,'' \emph{International Economic Review}, pp.
  787--789, 1977.

\bibitem{sutton2000policy}
R.~S. Sutton, D.~McAllester, S.~Singh, and Y.~Mansour, ``Policy gradient
  methods for reinforcement learning with function approximation,'' in
  \emph{Proceedings of the 12th International Conference on Neural Information
  Processing Systems (NeurIPS)}, 1999.

\bibitem{kakade2002natural}
S.~M. Kakade, ``A natural policy gradient,'' in \emph{Proceedings of the 15th
  International Conference on Neural Information Processing Systems (NeurIPS)},
  2002, pp. 1531--1538.

\bibitem{mnih2016asynchronous}
V.~Mnih, A.~P. Badia, M.~Mirza, A.~Graves, T.~Lillicrap, T.~Harley, D.~Silver,
  and K.~Kavukcuoglu, ``Asynchronous methods for deep reinforcement learning,''
  in \emph{International Conference on Machine Learning (ICML)}, 2016, pp.
  1928--1937.

\bibitem{quinn2009privacy}
E.~L. Quinn, ``Privacy and the new energy infrastructure,'' \emph{Available at
  SSRN 1370731}, 2009.

\bibitem{chou2017improving}
P.-W. Chou, D.~Maturana, and S.~Scherer, ``Improving stochastic policy
  gradients in continuous control with deep reinforcement learning using the
  beta distribution,'' in \emph{Proceedings of the 34th International
  Conference on Machine Learning-Volume 70}.\hskip 1em plus 0.5em minus
  0.4em\relax JMLR. org, 2017, pp. 834--843.

\bibitem{szidarovszky1982contributions}
F.~Szidarovszky and S.~Yakowitz, ``Contributions to cournot oligopoly theory,''
  \emph{Journal of Economic Theory}, vol.~28, no.~1, pp. 51--70, 1982.

\bibitem{novshek1985existence}
W.~Novshek, ``On the existence of cournot equilibrium,'' \emph{The Review of
  Economic Studies}, vol.~52, no.~1, pp. 85--98, 1985.

\bibitem{amir1996cournot}
R.~Amir, ``Cournot oligopoly and the theory of supermodular games,''
  \emph{Games and Economic Behavior}, vol.~15, no.~2, pp. 132--148, 1996.

\bibitem{ewerhart2014cournot}
C.~Ewerhart, ``Cournot games with biconcave demand,'' \emph{Games and Economic
  Behavior}, vol.~85, pp. 37--47, 2014.

\bibitem{silver2014deterministic}
D.~Silver, G.~Lever, N.~Heess, T.~Degris, D.~Wierstra, and M.~Riedmiller,
  ``Deterministic policy gradient algorithms,'' in \emph{Proceedings of the
  31st International Conference on International Conference on Machine Learning
  (ICML)}, 2014.

\bibitem{levine2013guided}
S.~Levine and V.~Koltun, ``Guided policy search,'' in \emph{Proceedings of 30th
  International Conference on International Conference on Machine Learning
  (ICML)}, 2013.

\bibitem{rosen1965existence}
J.~B. Rosen, ``Existence and uniqueness of equilibrium points for concave
  n-person games,'' \emph{Econometrica: Journal of the Econometric Society},
  pp. 520--534, 1965.

\bibitem{weisstein2003gershgorin}
E.~W. Weisstein, ``Gershgorin circle theorem,'' 2003.

\bibitem{ratliff2013characterization}
L.~J. Ratliff, S.~A. Burden, and S.~S. Sastry, ``Characterization and
  computation of local nash equilibria in continuous games,'' in \emph{2013
  51st Annual Allerton Conference on Communication, Control, and Computing
  (Allerton)}.\hskip 1em plus 0.5em minus 0.4em\relax IEEE, 2013, pp. 917--924.

\end{thebibliography}
\end{document}